\documentclass[oneside,a4paper,reqno,psamsfonts]{amsart}
\usepackage[latin1]{inputenc} 
\usepackage{amscd}
\usepackage{amsopn}
\usepackage{amstext}
\usepackage{amsxtra}
\usepackage{amssymb}
\usepackage{stmaryrd}
\usepackage{upref}
\usepackage{url}
\usepackage{textcase} 
\usepackage{bm} 
\usepackage{graphicx}
\usepackage{setspace}
\usepackage{caption}
\usepackage{placeins}
\numberwithin{algorithm}{section}

\theoremstyle{plain}
\newtheorem{thm}{Theorem}
\newtheorem{lem}[thm]{Lemma}
\newtheorem{cl}[thm]{Corollary}

\theoremstyle{definition}

\theoremstyle{remark}

\newtheorem{rem}[thm]{Remark}

\numberwithin{thm}{section}

\providecommand{\abs}[1]{\left\lvert #1 \right\rvert}

\providecommand{\ceil}[1]{\left\lceil #1 \right\rceil}

\providecommand{\set}[1]{\left\lbrace #1 \right\rbrace}
\providecommand{\gen}[1]{\left\langle #1 \right\rangle}

\newcommand{\field}[1]{\mathbb{#1}}

\newcommand{\N}{\field{N}}
\newcommand{\Z}{\field{Z}}

\newcommand{\F}{\field{F}}
\newcommand{\C}{\field{C}}

\newcommand{\MAGMA}{\textsc{Magma}}

\newcommand{\OV}{\mathcal{O}}

\DeclareMathOperator{\GL}{GL}

\DeclareMathOperator{\Sz}{Sz}

\DeclareMathOperator{\Tr}{Tr}
\DeclareMathOperator{\SLP}{\mathrm{SLP}}

\DeclareMathOperator{\Norm}{N}
\DeclareMathOperator{\Cent}{C}
\DeclareMathOperator{\Zent}{Z}

\DeclareMathOperator{\cln}{{:}}

\DeclareMathOperator{\Dih}{D}

\def\cn{\mathord{{\!\:{:}\:\!}}}

\newcommand{\OR}[1]{\operatorname{O} \bigl( #1 \bigr)}
\newcommand{\oR}[1]{\operatorname{o} \bigl( #1 \bigr)}
\newcommand{\TR}[1]{\operatorname{\Theta} \bigl( #1 \bigr)}

\title{Black box recognition of the Suzuki groups}
\author{John N. Bray}
\author{Henrik B\"a\"arnhielm}
\email{jbaa004@aucklanduni.ac.nz}
\address{Department of Mathematics \\ University of Auckland \\ Private Bag 92019 \\ Auckland \\ New Zealand}

\begin{document}

\begin{abstract}
  We present a black box algorithm that constructs standard generators
  for the Suzuki groups $\Sz(q)$, where $q = 2^{2m + 1}$ for some $m >
  0$. The algorithm is one-sided Monte Carlo, with no false positives,
  with time complexity $\OR{q (\log q)^6}$ group operations. We also
  present a black box algorithm, with time complexity $\TR{q}$ group
  operations, that performs constructive membership testing in
  $\Sz(q)$, and writes an element as a straight line program in the
  standard generators. Finally, we give a presentation for $\Sz(q)$
  that is efficient to verify. The algorithms have been implemented in
  the computer algebra system $\MAGMA$.
\end{abstract}

\maketitle

\section{Introduction}
\label{section:intro}

The family of finite simple groups known as the Suzuki groups was
introduced in \cite{MR0120283, suzuki62, MR0162840}, as matrix groups
of degree $4$.  The groups in this family are usually denoted
$\Sz(q)$, where $\F_q$ is the defining field, and $q = 2^{2m + 1}$ for
some $m > 0$. The following are our designated standard generators for
$\Sz(q)$.
\begin{align}
\label{x_std_gen} \hat{x} &= \begin{bmatrix}
1 & 0 & 0 & 0 \\
1 & 1 & 0 & 0 \\
1 & 1 & 1 & 0 \\
1 & 0 & 1 & 1
\end{bmatrix}, \\
\label{y_std_gen} \hat{y} &= \begin{bmatrix}
\omega^{t/2 + 1} & 0 & 0 & 0 \\
0 & \omega^{t/2} & 0 & 0 \\
0 & 0 & \omega^{-t/2} & 0 \\
0 & 0 & 0 & \omega^{-t/2 - 1} 
\end{bmatrix}, \\
\label{z_std_gen} \hat{z} &= \begin{bmatrix}
0 & 0 & 0 & 1 \\
0 & 0 & 1 & 0 \\
0 & 1 & 0 & 0 \\
1 & 0 & 0 & 0
\end{bmatrix},
\end{align}
where $t = 2^{m + 1} = \sqrt{2q}$ and $\omega$ is a primitive element
of $\F_q$. These standard generators are defined up to conjugation by
an element of the automorphism group of $\Sz(q)$ and up to
specification of $\omega$. In this paper we
present a black box algorithm that constructs the above standard
generators in a group isomorphic to $\Sz(q)$, and an algorithm that
writes an arbitrary element as a word in the standard generators.

Constructive recognition and membership algorithms for the natural and
defining characteristic absolutely irreducible representations of
$\Sz(q)$ are presented in \cite{baarnhielm_phd, baarnhielm05,
  MR3715222}. Here, for the first time, we deal with black box
representations of $\Sz(q)$. One motivation is to contribute to the
Matrix Group Recognition Project \cite{MR3283836, crlg01}.

The algorithms presented here are not black box versions of the
earlier algorithms, but are novel algorithms based on new ideas. In
practice these new algorithms are useful primarily for input
representations in non-defining characteristic, where $q$ is
small. They are the only such algorithms with proven time complexity
bounds for non-natural representations. In the natural representation,
the algorithms of \cite{MR3715222} are more practical.

In \cite[p. 17]{seress03}, the concept of a \emph{black box group} is
defined. In short, it is a group where the elements are encoded as
strings of uniform length over a finite alphabet, equipped with an
\emph{oracle} (the \lq\lq black box'') that can construct a string
representing the product of two given elements, construct a string
representing the inverse of a given element, and test if a given
element is the identity. 

Recall from \cite{clr90} and \cite{seress03} the standard time complexity notation $\OR{\cdot}$, $\oR{\cdot}$ and $\TR{\cdot}$.
The time complexity measures are given in bit operations. We will use
the following notation for a black box group $G$:
\begin{itemize}
\item[$\mu$] The time complexity of the group operation in $G$, \emph{i.e.} the number of bit operations required by the black box.
\item[$\eta$] The time complexity required to compute the order of an element of $G$.
\item[$\xi$] The time complexity required to construct a uniformly distributed random element of $G$.
\item[$\zeta$] The time complexity of a field operation, \emph{i.e.} a multiplication in $\F_q$.
\item[$\chi_D$] The time complexity required to solve an instance of the discrete logarithm problem in $\F_q$.
\item[$\chi_F$] The time complexity required to factorise $\abs{\Sz(q)}$.
\end{itemize}

This notation is used to make the algorithms and time complexity more
clear. Some is redundant:
\begin{itemize}
\item Clearly, $\chi_D \in \OR{q \zeta}$, and since $q$ is a power of $2$, by \cite{cop84}, 
\[ \chi_D \in \OR{\exp(c (\log_2 q)^{1/3} (\log\log_2 q)^{2/3})} \]
where $c > 0$ is a small constant. 
\item Clearly $\chi_F \in \OR{q}$ since $\abs{\Sz(q)} = q^2(q^2 + 1)(q-1)$. Moreover, by \cite{MR1416721}, we can use the \emph{special number field sieve} to obtain 
\[ \chi_F \in \OR{\exp((32\log_2\abs{\Sz(q)} / 9)^{1/3} (\log\log_2\abs{\Sz(q)})^{2/3})}.\]
\item Since $\log \abs{G} \in \OR{\log q}$, it follows from \cite{babai91} that $\xi \in \OR{(\log q)^5 \mu}$. 
\item Every element of $\Sz(q)$ has order $\OR{q}$, so that $\eta \in \OR{q \mu}$. Since element orders divide some number in $\set{4, q - 1, q \pm t + 1}$, the algorithm in \cite{crlg95} shows that $\eta \in \OR{\chi_F + \mu \log q \log\log q}$. 
\item Since elements of $\F_q$ are represented as bit-strings, by \cite[Theorem $8.23$]{VonzurGathen03}, $\zeta \in \OR{\log q \log\log q \log\log\log q}$.
\end{itemize}



The algorithm presented here will find standard generators expressed
as \emph{straight line programs} (abbreviated to SLPs) in the given
generating set. An SLP is a data structure for a word, a list of
cells, with each cell representing an operation such as a group
multiplication, see \cite[Section
  $5$]{Baarnhielm:2012:PRP:2069778.2070144}. Using it one can ensure
that, during evaluation, subwords occurring multiple times are not
computed more often than during construction. An important issue is
the length of the SLPs that are computed. We assume that SLPs of
random elements have length $\OR{n}$ where $n$ is the number of random
elements that have been selected so far during the execution of the
algorithm. Multiplication and inversion increases the length of an SLP
by $1$. Raising an element to a power $k$ increases the length of an
SLP by $\log(k)$ since powering is done using the repeated squaring
technique.

Our main result is the following.

\begin{thm} \label{thm_sz_std_gens}
Assume that $\F_q$, together with a primitive element, has been constructed, where $q = 2^{2m + 1}$ for some $m > 0$, and let $\hat{x}$, $\hat{y}$ and $\hat{z}$ be as in Equations \eqref{x_std_gen}, \eqref{y_std_gen} and \eqref{z_std_gen}. There exists a one-sided Monte Carlo algorithm with no false positives that, given a black box group $G = \gen{X}$ such that $G$ is simple, and $\varepsilon > 0$, determines whether $G \cong \Sz(q)$, and if so constructs $x, y, z \in G$ as straight line programs in $X$, such that the mapping 
\begin{align*}
x &\mapsto \hat{x} & y &\mapsto \hat{y} & z &\mapsto \hat{z}
\end{align*}
induces an isomorphism. The probability of failing to recognise that $G \cong \Sz(q)$ is less than $\varepsilon$. The algorithm has time complexity
$\OR{q\log(q)(\xi + \mu) + (\log\log q) \log(q) \eta  + q(\log q)^2 \zeta + \chi_F}$, or equivalently $\OR{q
  (\log q)^6 \mu}$. The length of the returned $\SLP$s is $\OR{q}$. 
\end{thm}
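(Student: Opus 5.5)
We prove the theorem by building the standard generators in three stages: first a Borel subgroup, then an element playing the role of $\hat{z}$, and finally an identification of the field. Throughout we use the subgroup structure of $\Sz(q)$. There is a Sylow $2$-subgroup $U$ of order $q^2$ and exponent $4$, whose centre $Z(U)$ is elementary abelian of order $q$. Its normaliser is the Borel subgroup $B = U \rtimes H$, with $H$ cyclic of order $q-1$. Every involution lies in a conjugate of $Z(U)$. In the matrix group, $\gen{\hat{x}, \hat{y}}$ is such a Borel subgroup, $\hat{y}$ generates the torus $H$, and $\hat{z}$ is an involution inverting $\hat{y}$ with $\hat{z} B \hat{z} \cap B = H$.

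\textbf{Step 1: settle the answer ``no'' and find a torus element.} First compute $\chi_F$-time factorisations of $q-1$ and $q\pm t+1$. Then sample random elements and compute their orders, costing $\OR{\eta}$ each. Proportions of elements of order $q-1$, $q\pm t+1$ and $4$ are bounded below by $\Omega(1/\log\log q)$, using standard $\varphi(n)/n$ estimates. So $\OR{\log\log q\,\log(1/\varepsilon)}$ samples either produce an element order not allowed in $\Sz(q)$ (return false; this gives no false positives, since we only answer ``no'' on certain evidence) or an element $h$ of order $q-1$. Using the same samples we also obtain an involution $j$, by powering an element of even order. Since $\Sz(q)$ has a single class of involutions, $\Cent_G(j)$ is the Sylow $2$-subgroup containing $j$.

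\textbf{Step 2: construct $U$ and $B$.} Write $B=\Norm_G(\gen{h})$-adjacent Borel; $h$ normalises exactly two Sylow $2$-subgroups, namely $U$ and $U^{z}$. To obtain elements of $U$, I would find elements $g$ with $[g,h]$ or $g h g^{-1}$ having controlled order. A more direct route: take the involution $j$, conjugate it so that it is centralised appropriately, or search for an involution $j'$ with $\gen{j', h}$ of order $q(q-1)$. The latter I would test via orders of products $j' h^k$: elements of $B\setminus U$ have order dividing $q-1$, while an element outside $B$ generically has order $q\pm t+1$.

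The brute-force fallback has cost $\OR{q}$ and matches the stated $\OR{q\log q(\xi+\mu)}$ bound. It runs over conjugates $j^{h^i}$ and random conjugates, and stops when products with $j$ have order $2$ or $4$, which signals a common Sylow subgroup. The probability that a random involution lies in a fixed $Z(U)$ is about $1/q^2$. Instead, pick random $g$ and consider $g^{-1} j g$ together with the $q-1$ conjugates under $\gen{h}$; this is where the $q\log q$ factor arises. Once two independent elements of $U$ normalised by $h$ are found, conjugation by powers of $h$ yields all of $U$. $Z(U)$ is obtained as squares of order-$4$ elements.

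\textbf{Step 3: choose $x$, $y$, $z$ and identify the field.} Pick $x\in U$ of order $4$ and take $y$ as a generator of $\gen{h}$. The involution $z$ must lie in $\Norm_G(\gen{h})\setminus B$; we find it by solving for an involution inverting $h$. A random element $g$ mapping $U$ to the opposite Sylow subgroup is corrected by an element of $B$, using the Bruhat decomposition $G=B\cup BzB$. Computing the relevant $B$-coordinates requires an effective isomorphism $Z(U)\cong \F_q$ that is compatible with the action of $h$, which acts as multiplication by $\omega^{e}$ for a suitable exponent. This is obtained by linear algebra over $\F_2$ on $Z(U)$ and $U/Z(U)$: write $x^{h^i}$ in a basis, at $\OR{q(\log q)^2\zeta}$ cost. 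We then adjust $x$, $y$ (replacing $y$ by a suitable power coprime to $q-1$) and $z$ so that the defining relations of a presentation of $\Sz(q)$ in $\hat{x},\hat{y},\hat{z}$ hold. Verifying this presentation guarantees the isomorphism; we could equally check the relations among $x$, $y$, $z$ and use simplicity of $G$. SLP lengths are $\OR{q}$ because of the $\OR{q}$ searches.

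\textbf{Main obstacle.} The main obstacle is Step 2, locating elements of the specific Sylow subgroup normalised by $h$ within the $\OR{q\log q}$ budget. I would first try the random-involution plus $\gen{h}$-orbit search, using order tests to detect when two involutions commute.
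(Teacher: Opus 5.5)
\textbf{Gap in Step~2.} There is a genuine gap at your Step~2, which you flag yourself, and the fallback search you describe cannot meet the budget.

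An element $h$ of order $q-1$ fixes exactly two points $P,Q\in\OV$. Only the identity of $\Sz(q)$ fixes three points, so $\gen{h}$ acts semiregularly on the other $q^2-1$ points. Hence a conjugate $j^{gh^i}$ lies in the centre of a Sylow $2$-subgroup normalised by $h$ if and only if $j^g$ already does. Running over the $\gen{h}$-orbit therefore gains nothing, and each random $g$ succeeds with probability only $2/(q^2+1)$.

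Suppose instead you test products $j\cdot j^{gh^i}$ against a fixed involution $j$. Then a random $g$ succeeds with probability about $1/q$ but costs $q-1$ products, so the search again takes $\TR{q^2}$ group operations. Worse, what you find lies in $\Cent_G(j)$, which $h$ does not normalise unless $j$ happened to fix $P$ or $Q$.

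The missing idea is Theorem~\ref{thm_bray_stab_trick}. Start from $f$ of order $4$ and put $s=f^2$. Bray's algorithm produces elements of $\Cent_G(s)$, and hence an involution $j\in\Cent_G(s)\setminus\set{1,s}$. For random $c$, the product $sj^c$ has odd order $2k+1$ with probability $1-1/(q^2+1)$ (the dihedral trick). Then $h=c(sj^c)^k$ satisfies $j^h=s$, so $h$ fixes the point fixed by $s$ and lies in the Borel subgroup normalising $\Cent_G(s)$. It has order $q-1$ with probability $\phi(q-1)/(q-1)$.

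Your Step~1 is also wrong as stated. Elements of order $4$ form a proportion exactly $1/q$ of $\Sz(q)$, and elements of even order about $1/q$, not $\Omega(1/\log\log q)$. So you need $\TR{q}$ samples to find $f$ or an involution. This is affordable only if you test $f^4=1\neq f^2$ at cost $\OR{\mu}$, rather than calling the order oracle on every sample.

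\textbf{Step~3 is not yet an algorithm.} Insisting that $y$ generate $\gen{h}$ forces $z$ to invert $h$, and a random involution does so with probability $1/(q^2+1)$. You give no way to obtain the opposite Sylow subgroup, or an element mapping $U$ onto it. In a black box group you cannot read off $\F_2$-coordinates of elements of $\Zent(U)$: that is a discrete-log-type search costing group operations, not $\OR{\zeta}$ field operations. Finally, an arbitrary $x\in U$ of order $4$ is not $\hat{x}$; both of its coordinates must be normalised relative to $y$ and $z$.

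The paper sidesteps all of this. It takes $z=s^c$ to be any involution outside the Borel subgroup; by double transitivity (Lemma~\ref{lem_main_step1}) this may be taken as $\hat{z}$ with $f\mapsto T(a_1,b_1)$. It lets $h\mapsto T(a_2,b_2)D(\lambda)$ be non-diagonal. It then normalises by $\OR{q}$ searches over conjugates by powers of $h$, each search testing one relation:
\begin{itemize}
\item order $5$ of $s^{h^j}z$ (Lemma~\ref{lem_elements_order5}) pins $s$ to $T(0,1)$;
\item the relation \eqref{min_poly_order2} picks out the power of $h$ with diagonal part $D(\omega)$;
\item the relation $z\bar{f}z\bar{f}^2z\bar{f}^3=1$ picks out $x\mapsto T(1,0)$ among the elements $fs^{h^j}$;
\item Lemma~\ref{lem_sz_maximal_trick} together with \eqref{EqTa0} gives $\bar{v}\mapsto D(a^2)$, and then the exactly diagonal $y=\bar{v}^{q/(2l)}$.
\end{itemize}
The presentation of Theorem~\ref{thm_short_pres} is checked at the end.
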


Note that the algorithm in Theorem \ref{thm_sz_std_gens} is intended
to be used within \cite{MR3283836}. In this application, we learn the
value of $q$ in polynomial time using \cite{general_recognition,
  blackbox_char} and other algorithms are used for the non-simple
group case, which justifies the assumption in Theorem
\ref{thm_sz_std_gens} that the input group is simple.

\begin{thm} \label{thm_sz_membership}
Assume that $\F_q$, together with a primitive element, has been constructed, where $q = 2^{2m + 1}$ for some $m > 0$. There exists a deterministic algorithm that, given 
\begin{itemize}
\item a simple black box group $G = \gen{X}$ such that $G \cong \Sz(q)$, 
\item standard generators $Y = \set{x, y, z} \subset G \cong \Sz(q)$ as found by the algorithm in Theorem \ref{thm_sz_std_gens},
\item $g \in U \geqslant G$ for some overgroup $U$
\end{itemize}
decides whether or not $g \in G$, and if so, expresses $g$ as an
$\SLP$ in $Y$. The algorithm has time complexity $\TR{q \mu}$. The
length of the returned $\SLP$ is $\OR{\log q}$.
\end{thm}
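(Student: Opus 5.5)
We have to prove Theorem \ref{thm_sz_membership}. The plan is to use the Bruhat decomposition of $\Sz(q)$ with respect to the Borel subgroup $F = \gen{x, y}$. This subgroup is the stabiliser of a point of the Suzuki ovoid and has order $q^2(q-1)$. Write $Q = \OV_2(F)$ for its unipotent radical, of order $q^2$, and $H = \gen{y}$ for the torus of order $q-1$. Every $g \in \Sz(q)$ is then either in $F$ or of the form $g = u_1 h z u_2$ with $u_1, u_2 \in Q$ and $h \in H$. The expression is unique once we fix the convention that $u_2 = 1$ in the first case.

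The first step is to enumerate $Q$ explicitly as SLPs in $Y$. The conjugates $x^{y^i}$ for $0 \leqslant i < 2m+1$ give $\F_2$-generators of $Q/\Zent(Q)$ modulo the centre, since conjugation by $\hat y$ scales the root parameter by a field element. The commutators and squares of these give the centre $\Zent(Q) \cong \F_q^+$. Each element of $Q$ is therefore a product of $\OR{\log q}$ of these basic elements. These basic elements have SLPs of length $\OR{\log q}$, so we have controlled SLP length for every element of $Q$. We never list all $q^2$ elements, however.

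The key step is to distinguish points using only black box operations. We use the action of $G$ on the $q^2+1$ conjugates of $F$, equivalently on the Sylow $2$-subgroups. A test of the form ``does $g$ normalise $Q$'' can be made from $\OR{\log q}$ commutator checks: check whether $g x^{y^i} g^{-1}$ lies in $Q$ for each generator. This in turn reduces to checking that an element is an involution or has order $4$ commuting appropriately with $\Zent(Q)$. Cheaper still is to test whether the elements commute with a fixed central involution $c \in \Zent(Q)$, because $\Cent_G(c) \leqslant F$.

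Given $g \in U$, we search for $u_2 \in Q$ with $g u_2^{-1} z^{-1}$ normalising $Q$, that is, sending the point $\infty$ appropriately. With $u_2$ running over $Q$ this costs $\TR{q^2}$ tests, which is too many. We therefore split it. First we determine the $\Zent(Q)$-coset of $u_2$ by running over the $q$ cosets of $Q/\Zent(Q)$, using the weaker condition that the image commutes with $c$ modulo the centre. The main obstacle is making this step cost $\TR{q}$ rather than $q^2$. Concretely, I would try to identify $u_2$ coordinatewise, using the fact that the ovoid points $\ne \infty$ are parametrised by $(a,b) \in \F_q^2$ and that the conjugates $c^{g}$ determine $a$ via a search of size $q$ and then $b$ via a second search of size $q$. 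Each candidate is tested with $\OR{1}$ group operations, giving $\TR{q\mu}$ overall.

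Once $u_2$ is found, $f = g u_2^{-1} z^{-1} \in F$, if $g \in G$. We then write $f = u_1 h$. For $h$, we use the action on $\Zent(Q)$ and search the $q-1$ powers of $y$ for the match of $f$ acting on $c$ modulo $Q$. For $u_1 = f h^{-1} \in Q$, we read off coordinates by a similar pair of searches of size $q$. Finally we verify that the evaluated word equals $g$. If any search fails or the final identity test fails, we report $g \notin G$; this is what makes the algorithm deterministic and correct for $g$ in an overgroup $U$. The resulting SLP is a product of a bounded number of pieces of length $\OR{\log q}$, hence has length $\OR{\log q}$.
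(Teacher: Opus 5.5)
\textbf{There is a genuine gap, at the step you yourself call the main obstacle.}

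Your skeleton agrees with the paper's. You strip the unipotent factor $u_2$ on the right of $g = u_1hzu_2$, so that $gu_2^{-1}z$ centralises $x^2$. You then finish in $\gen{x,y}$ with a constant number of searches of size $q$, which is essentially Theorem~\ref{thm_parabolic_slp}. Once the first coordinate $a$ of $u_2 = T(a,b)$ is known, a second search of size $q$ for the remaining coordinate would also be fine (enumerate $\Zent(Q)$ so that consecutive candidates differ by one basis element).

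What is missing is a test, costing $\OR{1}$ group operations per candidate, whose outcome depends on $a$ alone. The test you suggest does not work. Take $u = T(a,0)$ with the correct $a$, and suppose $b \neq 0$.
\begin{itemize}
\item Then $u_2u^{-1} = T(0,b)$, so $w := gu^{-1}z = u_1hzT(0,b)z$ moves $P_\infty$.
\item Hence $c^w$ is an involution fixing a different point from $c$.
\item So $[c,w] = c\,c^w$ has odd order greater than $1$, and $[c,w] \notin \Zent(Q)$.
\end{itemize}
This is exactly the outcome for a wrong $a$. Any test of the type ``lies in $Q$'', ``normalises $Q$'' or ``commutes with $c$'' only detects whether $P_\infty^{gu^{-1}z} = P_\infty$. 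It therefore cannot split the $q^2$ candidates into two searches of size $q$. Inside the Borel subgroup the analogous split works because membership $u_1x^{y^k} \in \Zent(Q)$ can be tested by squaring; nothing like that exists outside it.

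The idea you are missing is to use elements of order $5$ as a cheap trace oracle. An element of $\Sz(q)$ has order $5$ if and only if its trace is $1$ (Lemma~\ref{lem_sz_facts}), and order $5$ can be tested with $\OR{1}$ operations. The paper proceeds as follows.
\begin{itemize}
\item It first normalises the torus part: it finds the unique $j$ with $\abs{x^2(x^2)^{gy^j}} = 5$ and replaces $g$ by $gy^j$. Then $(x^2)^g = (x^2)^{zT(a,b)}$ exactly (Lemma~\ref{lem_invol_formula}). This is needed because the traces involve the involution $(x^2)^g$ itself, not only its fixed point.
\item The element $T(0,c)^{\hat z}(\hat x^2)^{\hat zT(a,b)}$ has trace $c^t(a^{t+2}+ab+b^t)^2$. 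So one search over $i$ with $\abs{(x^2)^{y^iz}(x^2)^g} = 5$ recovers $a^{t+2}+ab+b^t$.
\item A second search over $k$ with $\abs{(x^2)^{y^iz}(x^2)^{g(x^2)^{y^k}}} = 5$ recovers $a^{t+1}$, hence $a$ (Lemma~\ref{lem_slp_step2}).
\item Then $b$ is not searched for at all. It is one of the two roots of $b^2 + a^{t+1}b + c^{-1} + a^tc^{-t/2} = 0$, computed in $\F_q$ via Lemma~\ref{lem_quadratics}. A single black box comparison picks the right root.
\item An SLP of length $\OR{\log q}$ for $T(a,b)$ comes from $x^{y^k}$ and the binary expansion of $b^{t-1}$.
\end{itemize}
Without some such field-valued invariant read off by cheap order tests, your plan does not reach $\TR{q\mu}$.
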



The algorithms in Theorems \ref{thm_sz_std_gens} and \ref{thm_sz_membership} are implemented in the computer
algebra system $\MAGMA$ \cite{magma}.

An important feature of the constructive recognition algorithm in Theorem \ref{thm_sz_std_gens}
is that its time complexity is $\oR{q^2}$. It is straightforward to
obtain an algorithm that constructs standard generators in time $\OR{q^3}$, as follows. One constructs a point stabiliser by constructing two random elements of order
$q - 1$ that have a common fixed point, in expected time $\OR{q^2}$.
Then elements corresponding to $\hat{x}$ and $\hat{y}$ are easily
constructed within this point stabiliser. The element corresponding to
$\hat{z}$ is then constructed by taking random conjugates of $\hat{x}^2$,
in expected time $\OR{q^3}$.

The first obstacle to obtain running time $\oR{q^2}$ is therefore to
quickly construct a point stabiliser. This is done in Theorem
\ref{thm_bray_stab_trick}, which involves the algorithm of
\cite{bray00}, and the \lq\lq dihedral trick'' \cite[Theorem 1.9]{baarnhielm_phd} for conjugating involutions to each other.

To reduce the running time of the rest of the constructive recognitoin algorithm, we use crucial
relations from a presentation of $\Sz(q)$, and its properties in the natural represenation. These are described in Section \ref{bray_sz_presentation}. An outline of the constructive recognition algorithm in Theorem \ref{thm_sz_std_gens} is as follows:
\begin{itemize}
\item Construct an element $f$ of order $4$ by random search. This is step \ref{main_alg_elt4_step} in Section \ref{section:main_alg}. From $f$ we construct $z$, step \ref{main_alg_z_step} in Section \ref{section:main_alg}.
\item Construct a point stabiliser from $f$, and hence construct an element $h$ of order $q - 1$. This is step \ref{main_alg_stab_step} in Section \ref{section:main_alg}.
\item Replace $h$ so that its diagonal part corresponds to the correct primitive element of $\F_q$. This is steps \ref{main_alg_f_standard_step} to \ref{main_alg_h_standard_step} in Section \ref{section:main_alg}.
\item Obtain $x$ from $f$ by conjugation with $h$. This is step \ref{main_alg_x_step} in Section \ref{section:main_alg}.
\item Construct $y$ from $x$ and $z$ using a particular relation. This steps \ref{main_alg_omega_step} to \ref{main_alg_y_step} in Section \ref{section:main_alg}.
\end{itemize}

The constructive membership algorithm in Theorem \ref{thm_sz_membership} is
conceptually simpler. It mainly relies on the fact that $\Sz(q)$ is a
disjoint union of two double cosets of its unique maximal parabolic subgroup. This is
used to reduce the membership testing to the maximal parabolic.
Theorem \ref{thm_parabolic_slp} solves the membership problem, which is
essentially Gaussian elimination in the black box context.

\section{Preliminaries}
\label{section:preliminaries}

Recall that the \emph{trace} (over the prime field) of $a
\in \F_q$, where $q = p^e$ for some prime $p$, is $\Tr(a) =
\sum_{i = 0}^{e - 1} a^{p^i} \in \F_p$. Trivially, $\Tr(a)$ can be computed in time $\OR{e p \zeta}$.

Henceforth assume that $q = 2^{2m + 1}$ for some $m > 0$, $t = 2^{m + 1} = \sqrt{2q}$, let $\omega$ denote a fixed primitive element of $\F_q$ and let $\hat{x}$, $\hat{y}$ and $\hat{z}$ be as in Equations \eqref{x_std_gen}, \eqref{y_std_gen} and \eqref{z_std_gen}. We shall
also denote the Euler totient function by $\phi$. 

Elements of $\F_q$ are represented as polynomials over $\F_2$, \emph{i.e.} as bit-strings. Therefore we can express every $a \in \F_q$ as $a = \sum_{i = 0}^{2m} a_i
\omega^i$, where each $a_i \in \F_2$. Then 
\begin{equation} \label{eqn_trace_comp}
\Tr(a) = \sum_{j = 0}^{2m} a^{2^j} = \sum_{j = 0}^{2m} \sum_{i = 0}^{2m} a_i (\omega^i)^{2^j} = \sum_{i = 0}^{2m} a_i \Tr(\omega^i)
\end{equation}
and hence if $\Tr(\omega^i)$ is pre-computed for $i = 0, \dotsc, 2m$, then $\Tr(a)$ can be computed in time $\OR{\log q}$.

\begin{lem} \label{lem_primitive_elt}
There exists $k \in \set{1, \dotsc, q - 1}$ such that 
\begin{itemize}
\item $\gcd(k, q - 1) = 1$,
\item $\Tr(\omega^{-k}) = 1$.
\end{itemize}
\end{lem}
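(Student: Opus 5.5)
The plan is to rephrase the statement as the existence of a primitive element of $\F_q$ with trace $1$. Since $\omega$ is primitive, $\omega^{-k}$ is primitive exactly when $\gcd(k, q-1) = 1$. Conversely, every primitive element $\beta$ can be written $\beta = \omega^{-k}$ with $k \in \set{1, \dotsc, q-1}$ and $\gcd(k, q-1) = 1$. So it suffices to show that some primitive $\beta \in \F_q$ has $\Tr(\beta) = 1$. This is a special case of Cohen's theorem on primitive elements of prescribed trace, and one could simply cite it. To keep the paper self-contained, I would instead give the standard character-sum count, which is short in characteristic $2$.

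Let $\psi(a) = (-1)^{\Tr(a)}$ be the canonical additive character of $\F_q$, and let $P$ be the set of primitive elements, so $\abs{P} = \phi(q-1)$. The number of primitive elements of trace $1$ is
\[ N = \tfrac12 \sum_{\beta \in P} \bigl(1 - \psi(\beta)\bigr) = \tfrac12\Bigl(\phi(q-1) - \sum_{\beta\in P}\psi(\beta)\Bigr). \]
I would write the indicator of $P$ via Vinogradov's formula
\[ \mathbf{1}_P(x) = \frac{\phi(q-1)}{q-1}\sum_{d \mid q-1}\frac{\mu(d)}{\phi(d)}\sum_{\operatorname{ord}\chi = d}\chi(x). \]
This turns $\sum_{\beta \in P} \psi(\beta)$ into a combination of Gauss sums $G(\chi,\psi)$. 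The trivial character contributes $\sum_{x \neq 0}\psi(x) = -1$. Every nontrivial $\chi$ has $\abs{G(\chi,\psi)} = \sqrt q$. For each squarefree $d$ there are exactly $\phi(d)$ characters of order $d$, so
\[ \Bigl\lvert \sum_{\beta\in P}\psi(\beta)\Bigr\rvert \leqslant \frac{\phi(q-1)}{q-1}\bigl(1 + (2^{w}-1)\sqrt q\bigr), \]
where $w$ is the number of distinct prime divisors of $q-1$. Hence $N > 0$ as soon as $1 + (2^{w}-1)\sqrt q < q-1$, which holds whenever $2^{w} < \sqrt q - 1$, roughly.

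The main obstacle is verifying $2^{w(q-1)} < \sqrt q - 1$ for all $q = 2^{2m+1}$, $m > 0$. I would use the usual bound $2^{w(n)} \leqslant C\, n^{1/4}$, where $C$ is an explicit constant obtained from the finitely many primes $p < 16$, for which $2 > p^{1/4}$. This settles all $q$ beyond an explicit threshold. The remaining small values of $m$ I would check directly, either by hand or by a short $\MAGMA$ computation.

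Some small cases are immediate. Because the degree $2m+1$ is odd, $\Tr(1) = 1$, so the trace-$1$ hyperplane has $q/2$ elements including $1$. When $q - 1$ is a Mersenne prime (for example $q = 8, 32, 128$), every element other than $0, 1$ is primitive. The hyperplane then still contains $q/2 - 1 \geqslant 3$ primitive elements, which settles those cases at once. The other small $q$, such as $q = 512$ or $2048$, would be covered by the explicit check.
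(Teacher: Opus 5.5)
Your proposal is correct, but it takes a different route from the paper, which gives no argument of its own. The paper disposes of the lemma with the single citation \cite{MR676862}, as a reference for the existence of a primitive element of trace $1$. Your opening reduction (that $\omega^{-k}$ is primitive exactly when $\gcd(k,q-1)=1$) is left implicit there. Citing Cohen's theorem, as you also suggest, would serve equally well.

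Your self-contained route uses Vinogradov's indicator, Gauss sums of modulus $\sqrt q$, and a bound on $2^{w}$. It costs more space but gives a quantitative statement. If you keep the trivial character's contribution $-\phi(q-1)/(q-1)$ with its sign, you get
\[ 2N \geqslant \frac{\phi(q-1)}{q-1}\bigl(q - (2^{w}-1)\sqrt q\bigr). \]
So roughly a proportion $\phi(q-1)/(2(q-1))$ of all exponents $k$ work. That is more than the bare existence the paper uses when it searches for $a=\omega^l$ in the main algorithm.

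The sharper form also lets you drop the case check altogether. The sufficient condition becomes $2^{w}-1<\sqrt q$. Since $q-1=2^{2m+1}-1$ is odd, and $2$ has even multiplicative order modulo $3$, $5$, $11$ and $13$, the only prime below $16$ that can divide $q-1$ is $7$. Hence $2^{w}\leqslant 2\cdot 7^{-1/4}(q-1)^{1/4}<1.23\,q^{1/4}$. Because $s^2-1.23\,s+1>0$ for all real $s$, the condition holds for every $q$. This includes $q=8$, which your looser condition $2^{w}<\sqrt q-1$ does not reach, though you correctly handled it separately via the Mersenne-prime observation.
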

\begin{proof}
This is proved in \cite{MR676862}.
\end{proof}

\begin{lem} \label{lem_quadratics}
Let $a \in \F_q^{\times}$.
\begin{itemize}
\item 
The quadratic equation (an element of $\F_q[b]$)
\begin{equation}  \label{eq:lem_quad1}
b^2 + a^{t + 1} b + a^2 + a^{2t} = 0
\end{equation}
has solutions $b_1 = a^{t+1} \sum_{i=1}^{m+1} a^{-2^i}$ and $b_2 = b_1 + a^{t + 1}$.
\item Let $c \in \F_q^{\times}$. The quadratic equation
\begin{equation} \label{eq:lem_quad2}
b^2 + a^{t + 1} b + c^{-1} + a^t c^{-t/2} = 0
\end{equation}
has solutions $b_1 = a^{t + 1} \sum_{i = 0}^m \lambda^{2^i}$ and $b_2
= b_1 + a^{t + 1}$, where $\lambda = c^{-t / 2} a^{-t-2} $.
\end{itemize}
\end{lem}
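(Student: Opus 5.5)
The plan is to reduce each quadratic to Artin--Schreier form and then check that the proposed root telescopes. Over $\F_q$, with $q$ a power of $2$, consider a quadratic $b^2 + A b + C$ with $A \neq 0$. Substituting $b = A u$ and dividing by $A^2$ turns it into $u^2 + u = C/A^2$. So in both parts I would take $A = a^{t+1}$, which is nonzero since $a \in \F_q^{\times}$, and write the proposed $b_1$ as $A u$. It then suffices to verify $u^2 + u = C/A^2$.

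The second root needs no separate check. In characteristic $2$ the two roots sum to the linear coefficient $A$, so $b_2 = b_1 + A = b_1 + a^{t+1}$ is a root whenever $b_1$ is. Equivalently, $u \mapsto u+1$ preserves $u^2+u$. The claim is only that these are solutions, so no trace condition on $C/A^2$ has to be checked separately; existence will follow from the explicit formula.

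For \eqref{eq:lem_quad1} we have $C = a^2 + a^{2t}$, so
\[ C/A^2 = a^{-2t} + a^{-2}. \]
Take $u = \sum_{i=1}^{m+1} a^{-2^i}$. Squaring is additive, so $u^2 = \sum_{i=2}^{m+2} a^{-2^i}$. Hence $u^2 + u$ telescopes to $a^{-2} + a^{-2^{m+2}}$. Since $2^{m+2} = 2t$, this equals $a^{-2} + a^{-2t}$, as required.

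For \eqref{eq:lem_quad2} we have $C = c^{-1} + a^t c^{-t/2}$, so
\[ C/A^2 = c^{-1} a^{-2t-2} + c^{-t/2} a^{-t-2}. \]
The second summand is exactly $\lambda$. Taking $u = \sum_{i=0}^m \lambda^{2^i}$, the same telescoping gives $u^2 + u = \lambda + \lambda^{2^{m+1}} = \lambda + \lambda^t$.

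The only step requiring care is identifying $\lambda^t$ with the first summand. Using $t^2 = 2q$ gives
\[ \lambda^t = c^{-t^2/2} a^{-t^2 - 2t} = c^{-q} a^{-2q-2t}. \]
Since $x^q = x$ on $\F_q^{\times}$, this equals $c^{-1} a^{-2-2t}$, which is indeed the first summand. Keeping this exponent bookkeeping straight (using $t^2 = 2q$ and $x^q = x$) is the main point to watch. Everything else is the routine telescoping of Frobenius powers.
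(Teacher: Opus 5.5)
Your proposal is correct and follows essentially the same route as the paper. You substitute $b = a^{t+1}x$ to reach $x^2 + x = \lambda + \lambda^{2^{m+1}}$, check the root by telescoping Frobenius powers (using $2^{m+1} = t$, and $\lambda^t = c^{-1}a^{-2-2t}$ via $t^2 = 2q$ in the second part), and obtain $b_2$ from the invariance under $x \mapsto x+1$.
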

\begin{proof}
  Observe first that if $b^2 + a^{t + 1} b + y = 0$, then $(b + a^{t +
    1})^2 + a^{t + 1}(b + a^{t + 1}) + y = b^2 + a^{t + 1} b + y = 0$,
  so if $b$ is any solution then $b + a^{t + 1}$ is also a solution.

  Now observe that if $x_1 = \sum_{i = 0}^{r - 1} \lambda^{2^i}$ then
  $x_1^2 + x_1 = \lambda + \lambda^{2^r}$, so that $x_1$ is a solution
  to the quadratic equation
\begin{equation} \label{eq:lem_quad3}
  x^2 + x + \lambda + \lambda^{2^r} = 0.
\end{equation}
\begin{itemize}
\item The substitution $b = a^{t + 1} x$ transforms Equation \eqref{eq:lem_quad1} into $x^2 + x + a^{-2} + a^{-2t} = 0$. This is Equation \eqref{eq:lem_quad3} with $\lambda = a^{-2}$ so the solutions are given in the previous paragraph.

\item Similarly, the substitution $b = a^{t + 1} x$ transforms Equation \eqref{eq:lem_quad2} into $x^2 + x + c^{-1} a^{-2-2t} + c^{-t/2} a^{-t-2} = 0$. Since $c^{-1} a^{-2-2t} = (c^{-t/2} a^{-t-2})^t$, this is Equation \eqref{eq:lem_quad3} with $\lambda = c^{-t/2} a^{-t-2}$ so the solutions are given by the previous paragraph.
\end{itemize}
This completes the proof.
\end{proof}

Assume $F$ is a field of characteristic $2$ equipped with an automorphism $\sigma$ that satisfies $\lambda^{\sigma^2}=\lambda^2$ for all $\lambda\in F$.
Henceforth we shall use the following definitions, where $a,b \in F$ and $k \in F^{\times}$.
\begin{align*}
T(a, b) &= \begin{bmatrix}
1 & 0 & 0 & 0 \\
a & 1 & 0 & 0 \\
a^{\sigma + 1} + b & a^\sigma & 1 & 0 \\
a^{\sigma + 2} + ab + b^\sigma & b  & a & 1 
\end{bmatrix} 
& D(k) &= \begin{bmatrix} 
k^{1+ \sigma/2} & 0 & 0 & 0 \\
0 & k^{\sigma/2} & 0 & 0 \\
0 & 0 & k^{-\sigma/2} & 0 \\
0 & 0 & 0 & k^{-1-\sigma/2} 
\end{bmatrix} 
\end{align*}
In the above, certain ``powers'' must be suitably interpreted. For example $k^{\sigma/2+1}$ means $k\sqrt{k^\sigma}$. Note that in the case $F = \F_q$ we have $\sigma(a) = a^t$ for $a \in \F_q$.


From Equations \eqref{x_std_gen} and \eqref{y_std_gen}, $\hat{x} = T(1, 0)$ and $\hat{y} = D(\omega)$. The following rules are straightforward to verify:
\begin{align}
T(a_1, b_1)T(a_2, b_2) &= T(a_1 + a_2, b_1 + b_2 + a_1 a_2^t) \\
T(a, b)^{D(\lambda)} &= T(\lambda a, \lambda^{t + 1} b). \label{eqn:T_conj}
\end{align}

When $(a, b, c, d) \neq (0, 0, 0, 0)$, we denote $\gen{(a, b, c, d)}$ by $(a : b : c : d)$. We will use the following well-known facts about the Suzuki groups, see \cite[Chapter $11$]{huppertIII} and \cite{baarnhielm05, MR0120283, suzuki62, MR0162840}.

\begin{lem} \label{lem_sz_facts}
Let $\Sz(q) = G \leqslant \GL(4, q)$.
\begin{enumerate}
\item $G = \gen{\hat{x}, \hat{y}, \hat{z}}$, and $\abs{G} = q^2 (q - 1) (q
  + t + 1)(q - t + 1)$, where the factors $q^2$, $q - 1$, $q
  + t + 1$, and $q - t + 1$ are relatively prime.
\item There exists $\OV \subset \F_q^4$ such that $G$ acts faithfully
  and doubly transitively on the projective points of
  \[ \OV = \set{(1 : 0 : 0 : 0)} \cup \set{(a b + a^{t + 2} + b^t : b : a : 1) \mid a,b \in \F_q} \]
  \emph{i.e.} the action is on elements of the set defined up to
  multiplication by a scalar. Denote $P_{\infty} = (1 : 0 : 0 : 0)$,
  $P(a, b) = (a b + a^{t + 2} + b^t : b : a : 1)$ and $P_0 = P(0, 0)$.
\item \label{sz_fact_point_stab}
  $\gen{\hat{x}, \hat{y}} \cong (\F_q^{+} {.} \F_q^{+}) {:} \F_q^{\times}$ is
  a maximal parabolic subgroup, and also the stabiliser of a point of $\OV$.
\item \label{sz_fact_point_stab_center} $\Cent_G(\hat{x}^2) = \gen{\hat{x}, \hat{y}}^{\prime} \cong \F_q^{+}
  {.} \F_q^{+}$, where $H^{\prime}$ denotes the commutator subgroup of a group $H$.
\item \label{sz_fact_even_order_center}
  $\Zent(\gen{\hat{x}, \hat{y}}^{\prime}) = \gen{\hat{x},
  \hat{y}}^{\prime\prime} \cong \F_q^{+}$, its non-identity elements
  are the involutions in $\gen{\hat{x}, \hat{y}}$, and $\gen{\hat{y}}$ acts transitively on $\Zent(\gen{\hat{x}, \hat{y}}^{\prime}) \setminus \gen{1}$.
\item All elements of $G$ of odd order, with the same matrix trace, are
  conjugate. There are $q + 3$ conjugacy classes of elements of odd order.
\item \label{sz_fact_even_order_conj} There are $3$ conjugacy classes of elements of even order, with representatives $T(0, 1)$, $T(1, 0)$ and $T(1, 1)$. These elements have trace $0$ and orders $2, 4, 4$.
\item \label{sz_fact_trace_5}

  All elements of $G$ of order $5$ have trace $1$, so they are all conjugate.
\item In $G$, the proportion of elements of order $q - 1$ is $\phi(q -
  1)/(2(q - 1))$, and in $\gen{\hat{x}, \hat{y}}$, the proportion is
  $\phi(q - 1)/(q - 1)$.
\item 
  \label{sz_fact_unique_rep}
Every element of $G$ has a unique representation as one of the forms $(T(a, b)\ D(l))$ or $(T(a, b) \ D(l) \ \hat{z}\ T(c, d))$ where $a,b,c,d,l \in \F_q$ and $l \neq 0$. If an element is represented as one of the two forms, then it cannot be represented as the other.
\item \label{sz_fact_elt_orders} Element orders in $\Sz(q)$ are divisors of some number in $\set{4, q - 1, q \pm t + 1}$.
\end{enumerate}
\end{lem}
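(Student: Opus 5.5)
The plan is to derive everything from the explicit matrix description together with the standard structure theory in \cite[Chapter $11$]{huppertIII} and \cite{MR0120283, suzuki62}, doing by hand only what follows from the formulas for $T(a,b)$ and $D(k)$ given above. The order formula in (1) comes from $q^4+1=(q+t+1)(q-t+1)$. The factors are pairwise coprime because $q^2$ is a power of $2$ and the others are odd. Also $(q+t+1)-(q-t+1)=2t$ is a power of $2$, and $q-1$ is coprime to $q^2+1=(q-1)(q+1)+2$.

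For (2) I would first check, using the displayed composition rules and the fact that $\hat{z}$ swaps $P_\infty$ and $P_0$, that $\OV$ is preserved. Then:
\begin{itemize}
\item The group $U=\{T(a,b)\}$ acts regularly on $\OV\setminus\{P_\infty\}$, since $P_0\,T(a,b)$ is $P(a,b)$ up to a relabelling of the parameters.
\item $\hat{z}$ moves $P_\infty$, which gives double transitivity.
\item Faithfulness is automatic, since $\OV$ spans $\F_q^4$ and only scalars act trivially, and $\Sz(q)$ contains no nontrivial scalars.
\end{itemize}
For (3), (4) and (5) the key computations are as follows.
\begin{itemize}
\item The stabiliser of $P_\infty$ is the lower triangular part $UD$, where $D=\{D(k)\}$. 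Its maximality follows from double transitivity together with simplicity.
\item The commutator rule $[T(a_1,b_1),T(a_2,b_2)]=T(0,\,a_1a_2^t+a_2a_1^t)$ shows that $U'=Z(U)=\{T(0,b)\}$.
\item The formula $T(a,b)^2=T(0,a^{t+1})$ shows that the involutions of $UD$ are exactly the elements $T(0,b)$ with $b\neq 0$.
\item The rule \eqref{eqn:T_conj} gives transitivity of $\gen{\hat{y}}$ on $T(0,b)$ with $b\neq0$, because $\lambda\mapsto\lambda^{t+1}$ is a bijection on $\F_q^\times$, as $\gcd(t+1,q-1)=1$.
\item The centraliser statement in (4) follows because $\Cent_G(\hat{x}^2)$ fixes the unique fixed point of $\hat{x}^2$, which is $P_\infty$, so it lies in $UD$, and no nontrivial $D(k)$ centralises it.
\end{itemize}

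For (10) I would use the Bruhat decomposition $G=B\cup B\hat{z}U$ with $B=UD$. This follows from double transitivity, since $B$ has exactly two orbits on $\OV$, namely $\{P_\infty\}$ and its complement. Uniqueness of the two forms follows from regularity of $U$ on the second orbit and from $B\cap U^{\hat{z}}=1$.

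The remaining items, (6), (7), (8), (9) and (11), concern the conjugacy classes. Here I would quote Suzuki's classification \cite{suzuki62}. The maximal tori are cyclic of orders $q-1$ and $q\pm t+1$, they are TI subgroups, and their normalisers have index $2$ and $4$ over the tori. Every element of even order is a $2$-element, since involution centralisers are the $2$-groups $U$ by (4). This gives (11). The $2$-classes are represented by $T(0,1)$, $T(1,0)$ and $T(1,1)$: the elements of order $4$ split into two classes under $D$ because $T(a,b)$ is not conjugate to its inverse. Item (7) follows. Counting class sizes from the centraliser orders then gives $q+3$ classes of odd order. These are separated by trace because the traces of semisimple elements (eigenvalues $\lambda^{\pm1},\lambda^{\pm t}$) determine the torus class. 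This is the main obstacle; I would cite \cite{huppertIII} rather than redo it.

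Item (8) then follows by computing the trace of an element of order $5$ in the torus of order $q\pm t+1$ divisible by $5$ (note $5\mid q^2+1$). The sum of a primitive fifth root of unity and its Galois conjugates in characteristic $2$ equals $1$.

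For (9), the elements of order $q-1$ are the generators of conjugates of $D$. There are $\abs{G}/(2(q-1))$ such conjugates, each containing $\phi(q-1)$ generators, and distinct conjugates intersect trivially. In $UD$ the count is $q^2\phi(q-1)$, since every element $T(a,b)D(k)$ with $k\neq1$ is conjugate into $D$.
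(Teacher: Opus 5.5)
The paper gives no argument for this lemma beyond referring to Huppert--Blackburn, Chapter XI, and Suzuki's papers. Verifying (1)--(5), (9) and (10) by hand from the matrices, and quoting Suzuki for the class-theoretic items, is therefore a legitimate and more self-contained version of the same thing.

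One step, however, cannot work: the count in (6). Take exactly the data you list: TI tori $A$ of orders $q-1$ and $q\pm t+1$, with $\abs{\Norm_G(A):A}$ equal to $2$ and $4$ respectively. The number of classes of elements of odd order is then
\[ 1+\frac{q-2}{2}+\frac{q+t}{4}+\frac{q-t}{4}=q, \]
not $q+3$. The number $q+3$ is the \emph{total} class number of $\Sz(q)$, reached only after adding the three classes of $T(0,1)$, $T(1,0)$ and $T(1,1)$. For example, $\Sz(8)$ has $11$ classes, of which $8$ (orders $1,5,7,7,7,13,13,13$) consist of odd-order elements. So the second sentence of (6) is misstated, and your assertion that the centraliser count ``gives $q+3$'' is false. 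What you can prove is that there are $q$ classes of odd-order elements, identity included. This fits the first sentence of (6): injectivity of the trace on these classes then makes it a bijection onto $\F_q$, with the identity at $0$ and the single class of elements of order $5$ at $1$.

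Smaller points to repair:
\begin{itemize}
\item The factorisation is $(q+t+1)(q-t+1)=q^2+1$, using $t^2=2q$, not $q^4+1$.
\item That $\hat{z}$ preserves $\OV$ does not follow from the two displayed composition rules. You need $Q:=ab+a^{t+2}+b^t\neq0$ for $(a,b)\neq(0,0)$, and then $P(a,b)\hat{z}=P(b/Q,a/Q)$. This can be checked directly or read off from the formula for $\hat{z}T(a,b)\hat{z}$ in Section~\ref{bray_suzuki_presentation}.
\item The equality $G_{P_\infty}=UD$ is asserted without reason; fixing $P_\infty$ only forces the first row of a matrix to be $(\ast,0,0,0)$. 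Yet your derivation of the Bruhat decomposition in (10) rests on it. Get it from orbit--stabiliser with $\abs{G}$ from (1), or from the closure of $B\cup B\hat{z}U$ under multiplication established in Section~\ref{bray_suzuki_presentation}.
\item For faithfulness, spanning is not enough, since diagonal matrices fix every point of a basis. Use instead that $P_\infty,P_0,P(1,0),P(0,1),P(1,1)$ form a frame.
\item In (7), the fact that $T(1,0)$ is not conjugate to its inverse only gives at least two classes of elements of order $4$. Getting exactly two needs the TI property of $U$ together with $T(a,b)^{T(c,d)}=T(a,\,b+ac^t+a^tc)$, which gives $UD$-classes of size $q(q-1)/2$.
\item The eigenvalues of $D(k)$ are $\mu^{\pm1},\mu^{\pm(t+1)}$ with $\mu=k^{t/2}$, not $\lambda^{\pm1},\lambda^{\pm t}$.
\end{itemize}
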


The list of maximal subgroups of $\Sz(q)$ is also well-known, see \cite[Chapter $11$]{huppertIII}.

\begin{lem} \label{sz_maximal_subgroups}
A maximal subgroup of $G = \Sz(q)$ is conjugate to one of the following subgroups.
\begin{itemize}
\item The point stabiliser $\gen{\hat{x}, \hat{y}} = \Norm_G(\gen{\hat{x}, \hat{y}}^{\prime})$.
\item The normaliser $\Norm_G(\gen{\hat{y}}) = \gen{\hat{y}, \hat{z}} \cong \Dih_{2(q - 1)}$.
\item The normalisers $\mathcal{B}_i = \Norm_G(\gen{u_i})$ for $i = 1,2$, where $u_i \in G$ and $\abs{u_i} = q \pm t + 1$, satisfy $\mathcal{B}_i = \gen{u_i}{:}\gen{v_i}$ where $v_i \in G$, $\abs{v_i} = 4$ and $u_i^{v_i} = u_i^q$.
\item $\Sz(\hat{q})$ where $q$ is a prime power of $\hat{q}$ for $\hat{q} > 2$.
\end{itemize}
\end{lem}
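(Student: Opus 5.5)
This lemma is the classical classification of maximal subgroups of $\Sz(q)$ due to Suzuki \cite{MR0162840} (see also \cite[Chapter $11$]{huppertIII}), so the plan is the standard one: take a maximal subgroup $M < G$ and split on whether $\abs{M}$ is even or odd.

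\emph{Case 1: $\abs{M}$ is even.} Then $M$ contains an involution. If the Sylow $2$-subgroup $S$ of $M$ has a characteristic (or at least $M$-normal) $2$-subgroup, we argue as follows. By Lemma \ref{lem_sz_facts}(\ref{sz_fact_point_stab_center},\ref{sz_fact_even_order_center}), each involution lies in the centre of a unique conjugate of $\gen{\hat{x},\hat{y}}^{\prime}$. If $\mathrm{O}_2(M) \neq 1$, then $M$ normalises a nontrivial $2$-subgroup. This subgroup fixes a unique point of $\OV$, because by Lemma \ref{lem_sz_facts}(\ref{sz_fact_point_stab}) the $2$-elements fixing a point are exactly the Sylow $2$-subgroup of its stabiliser, and distinct point stabilisers meet in a Cartan subgroup of odd order. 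Hence $M$ fixes that point, and by maximality $M$ is conjugate to $\gen{\hat{x},\hat{y}} = \Norm_G(\gen{\hat{x},\hat{y}}^{\prime})$.

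If $\mathrm{O}_2(M) = 1$, we study the action of $M$ on $\OV$. Suppose $M$ is not $2$-transitive on some orbit. Then consider involution centralisers in $M$: $\Cent_G(u) = \Cent_G(\hat{x}^2)$-conjugate is a $2$-group. So $M$ has self-centralising Sylow $2$-structure, and the Brauer--Suzuki / Feit-type counting argument (Frobenius-group analysis) shows that $M$ is either a Frobenius group with elementary abelian or cyclic odd kernel and complement of order $2$ or $4$, or a Zassenhaus group. In the first situation the kernel is contained in a cyclic torus of order $q-1$ or $q\pm t+1$ (the tori are the centralisers of odd-order elements, TI by Lemma \ref{lem_sz_facts}), which gives the dihedral normaliser $\gen{\hat{y},\hat{z}}$ or $\mathcal{B}_i$. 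In the second situation Suzuki's classification of Zassenhaus groups of this type gives $M \cong \Sz(\hat{q})$, realised over a subfield $\F_{\hat{q}}$ with $q$ a power of $\hat{q}$.

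\emph{Case 2: $\abs{M}$ is odd.} Every odd-order element lies in a unique maximal torus, and these tori are cyclic and TI. A solvable odd-order subgroup then normalises a torus, since its Fitting subgroup is contained in a torus. The normaliser of a torus has even order, so $M$ is not maximal in this case.

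The main obstacle is the Zassenhaus-group step, that is, identifying the even-order subgroups not contained in a parabolic with subfield groups. This is the deep part of Suzuki's paper, and in the present context I would simply cite \cite{MR0162840} and \cite[Chapter $11$]{huppertIII} for it rather than reprove it. This is indeed how the lemma is presented: as well known.
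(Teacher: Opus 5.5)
The paper gives no argument of its own here; it quotes Suzuki's classical theorem via \cite[Chapter $11$]{huppertIII}. So your decision to cite the deep part matches the paper. Your case $\mathrm{O}_2(M)\neq 1$ is sound: every non-identity $2$-element fixes exactly one point of $\OV$, because two-point stabilisers have odd order $q-1$.

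Your odd-order case is also fine once solvability is justified, and you do not need Feit--Thompson for that. The Sylow $p$-subgroups of $G$ for odd $p$ lie in cyclic tori of pairwise coprime orders $q-1$ and $q\pm t+1$. Hence $M$ has all Sylow subgroups cyclic and is metacyclic. Note, however, that the torus facts you use are not among the items of Lemma \ref{lem_sz_facts}, so they too must come from the literature. These are that the tori are TI, and that $\Cent_G(g)$ is the torus containing $g$ for $1\neq g$ of odd order.

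The genuine gap is the even-order case with $\mathrm{O}_2(M)=1$. The sentence about $M$ not being $2$-transitive on some orbit leads nowhere. ``Brauer--Suzuki / Feit-type counting'' is not an argument either: your dichotomy (a Frobenius group with complement of order $2$ or $4$, or a Zassenhaus group) is essentially the conclusion of the theorem.

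A workable split is on a Sylow $2$-subgroup $S$ of $M$.
\begin{itemize}
\item If $S$ is cyclic, then $\abs{S}\in\set{2,4}$, since $2$-elements of $G$ have order at most $4$. Burnside's normal $p$-complement theorem gives a normal $2$-complement $K$, and $K\neq 1$ because $\mathrm{O}_2(M)=1$. Your odd-order argument, together with $\abs{\Norm_G(A):A}\in\set{2,4}$, puts $K$ inside a unique torus $A$ with $\Cent_G(K)=A$. Hence $M\leqslant\Norm_G(A)$, which is conjugate to $\Dih_{2(q-1)}$ or to some $\mathcal{B}_i$.
\item If $S$ is non-cyclic, you are in the hard case, and this is exactly what the citation must supply. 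One has to show that $M$ acts as a Zassenhaus group on a suitable orbit, and then that $M$ is \emph{conjugate in $G$} to a subfield subgroup. An abstract isomorphism $M\cong\Sz(\hat q)$ from the classification of Zassenhaus groups is not enough for the lemma.
\end{itemize}

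Three further points are not covered by your sketch.
\begin{itemize}
\item You need $q=\hat q^{\,r}$ with $r$ prime; otherwise $\Sz(\hat q)$ lies in an intermediate subfield subgroup.
\item You need $\hat q>2$, since $\Sz(2)\cong 5{:}4$ lies in the $\mathcal{B}_i$ whose torus order is divisible by $5$.
\item The structural assertions of the statement, such as $\Norm_G(\gen{\hat y})=\gen{\hat y,\hat z}$ and $u_i^{v_i}=u_i^q$, are not addressed at all. As in the paper, they have to be taken from the cited sources.
\end{itemize}
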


From this we immediately obtain the following result.

\begin{cl} \label{sz_subgroups}
Proper subgroups of $\Sz(q)$ are either
\begin{itemize}
\item cyclic,
\item dihedral of order dividing $2(q - 1)$ or $2(q \pm t + 1)$,
\item $\Cent_n \cln \Cent_4$ where $n \mid q \pm t + 1$,
\item $\Sz(q^{\prime})$ where $q$ is a proper power of $q^{\prime} > 2$,
\end{itemize}
or contained in a point stabiliser.
\end{cl}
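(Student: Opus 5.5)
The plan is to derive Corollary \ref{sz_subgroups} from Lemma \ref{sz_maximal_subgroups}: a proper subgroup $H < \Sz(q)$ lies in some maximal subgroup $M$, and $M$ is conjugate to one of the four types listed there. Conjugation preserves all the properties in the conclusion (cyclic, dihedral of a given order, being of the form $\Cent_n \cln \Cent_4$, being isomorphic to $\Sz(q')$, being contained in a point stabiliser, since conjugates of point stabilisers are point stabilisers). So we may assume $M$ itself is one of the listed subgroups and determine the possible shapes of subgroups $H \leqslant M$ case by case.

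\emph{Parabolic case.} If $M = \gen{\hat{x},\hat{y}}$, then $H$ is contained in a point stabiliser by Lemma \ref{lem_sz_facts}(\ref{sz_fact_point_stab}), and this is one of the allowed outcomes.

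\emph{Case $M = \Norm_G(\gen{\hat{y}}) \cong \Dih_{2(q-1)}$.} Every subgroup of a dihedral group $\Dih_{2n}$ is either cyclic (a subgroup of the rotation group, or of order $2$ generated by a reflection) or dihedral $\Dih_{2d}$ with $d \mid n$. Hence $H$ is cyclic or dihedral of order dividing $2(q-1)$.

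\emph{Case $M = \mathcal{B}_i = \gen{u_i}\cln\gen{v_i}$ with $\abs{u_i} = N = q \pm t + 1$ (odd), $\abs{v_i}=4$, $u_i^{v_i}=u_i^q$.} Let $K = H \cap \gen{u_i}$, which is cyclic of order $n \mid N$. Then $H/K$ embeds in $M/\gen{u_i} \cong \Cent_4$, so $\abs{H/K} \in \set{1,2,4}$. Since $\gcd(n,4)=1$, Schur--Zassenhaus gives $H = K \cln C$ with $C$ cyclic of order $1$, $2$ or $4$, and $C$ is conjugate in $M$ into $\gen{v_i}$, generated by $v_i^{4/\abs{C}}$ up to conjugacy. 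If $\abs{C}=1$, then $H$ is cyclic. If $\abs{C}=2$, then $v_i^2$ acts on $\gen{u_i}$ by $u \mapsto u^{q^2}$. Since $q^4 \equiv 1 \pmod N$ (indeed $N \mid q^4+1-\ldots$; precisely $N \mid q^2+1$, so $q^2 \equiv -1$), the element $v_i^2$ inverts $u_i$, and $H$ is dihedral of order $2n \mid 2(q \pm t+1)$. If $\abs{C}=4$, then $H \cong \Cent_n \cln \Cent_4$ with $n \mid q \pm t + 1$, possibly with $n=1$, in which case $H$ is cyclic.

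\emph{Subfield case.} If $M \cong \Sz(\hat q)$, then either $H = M$, giving the fourth item, or $H$ is a proper subgroup of $\Sz(\hat q)$, and we apply induction on $q$. Lemma \ref{sz_maximal_subgroups} applies equally to $\Sz(\hat q)$, and the conclusions transfer to $\Sz(q)$ for the following reasons. Cyclic and $\Sz(q'')$ subgroups remain of those types, with $q$ a proper power of $q''$ because $\hat q$ is. Orders dividing $2(\hat q-1)$ divide $2(q-1)$, since $\hat q - 1 \mid q-1$. Point stabilisers of $\Sz(\hat q)$ lie in point stabilisers of $\Sz(q)$, because the subfield subgroup preserves $\OV$.

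The main obstacle is the divisibility $\hat q \pm \hat t + 1 \mid q \pm t + 1$ for some choice of sign, which is needed so that the dihedral and $\Cent_n\cln\Cent_4$ subgroups of $\Sz(\hat q)$ arising from its $\mathcal{B}_i$ have the correct order. I would handle this without arithmetic: such a subgroup contains a cyclic subgroup $\Cent_n$ of odd order $n$ coprime to $\hat q(\hat q-1)$. In $\Sz(q)$, an element of odd order is either a divisor of $q-1$ (lying in a torus) or has order dividing $q \pm t+1$, by Lemma \ref{lem_sz_facts}(\ref{sz_fact_elt_orders}) and the pairwise coprimality in Lemma \ref{lem_sz_facts}(1). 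If $n \mid q-1$, then $H$ normalises a torus and so lies in $\Dih_{2(q-1)}$, hence is cyclic or dihedral, since a group with a normal cyclic subgroup of odd order $n>1$ and an element of order $4$ inverting or acting faithfully cannot sit in a dihedral group. Otherwise $n \mid q \pm t + 1$, giving the required shape. I would argue this directly via maximal subgroups of $\Sz(q)$ containing $\Norm(\Cent_n)$, avoiding the number theory.
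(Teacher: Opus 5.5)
Your overall route is the paper's: the paper simply calls the corollary immediate from Lemma \ref{sz_maximal_subgroups}, and you fill in the details. The reduction to a maximal subgroup and the parabolic and $\Dih_{2(q-1)}$ cases are fine. So is the $\mathcal{B}_i$ case, with its Sylow/Schur--Zassenhaus complement and $v_i^2$ inverting because $q^2 \equiv -1 \pmod{q \pm t + 1}$. You are also right that the subfield case needs an induction plus a divisibility check that the paper leaves implicit.

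The gap is in that check. Take $H \cong \Cent_n \cln \Cent_4 \leqslant \Sz(\hat q)$ with $1 < n \mid \hat q \pm \hat t + 1$. You dismiss the branch where a generator of $\Cent_n$ has order dividing $q-1$ by saying that $H$ ``normalises a torus and so lies in $\Dih_{2(q-1)}$''. But $H$ only normalises $\Cent_n$. To get $\Norm_G(\Cent_n) \leqslant \Norm_G(T) \cong \Dih_{2(q-1)}$ you would need that $\Cent_n$ lies in a torus $T$ and that $\Cent_G(\Cent_n) = T$, and neither fact follows from Lemma \ref{lem_sz_facts}. The proposed argument ``via maximal subgroups of $\Sz(q)$ containing $\Norm(\Cent_n)$'' is not carried out. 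Since one such maximal subgroup may again be a subfield group, it is not even clearly non-circular. (For dihedral $H$ this branch is harmless anyway, since $n \mid q-1$ gives $2n \mid 2(q-1)$ directly.)

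The branch is in fact empty, and the number theory you wanted to avoid takes two lines. Both $q$ and $\hat q$ are odd powers of $2$ with $q$ a power of $\hat q$, so $q = \hat q^{r}$ with $r$ odd. Using $(\hat q + \hat t + 1)(\hat q - \hat t + 1) = \hat q^2 + 1$, this gives
\[ \hat q \pm \hat t + 1 \mid \hat q^2 + 1 \mid (\hat q^2)^r + 1 = q^2 + 1 = (q+t+1)(q-t+1). \]
Moreover $\gcd(q^2+1, q-1) = \gcd(2, q-1) = 1$, so $n$ is odd and coprime to $q-1$. Now apply your own observation, Lemma \ref{lem_sz_facts}(\ref{sz_fact_elt_orders}), to a single generator of $\Cent_n$. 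This forces $n \mid q+t+1$ or $n \mid q-t+1$. Working with an element, rather than only with $n \mid q^2+1$, is what rules out $n$ splitting across the two coprime factors. With this replacing the torus argument, your proof is complete.
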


The following results are used when constructing $y$ in Theorem \ref{thm_sz_std_gens}.

\begin{lem} \label{lem_sz_hard_maximals} If $g, h \in G = \Sz(q)$
  satisfy $\abs{g} = 2$, $\abs{h} = 4$, $\abs{hg} = 4$ then
  $\abs{h^2 g} \in \set{1, 2}$ or $\abs{h^2 g} \mid (q \pm t + 1)$. If $\abs{h^2 g} = q \pm t + 1$ then $\gen{g, h} \cong \Cent_{q \pm t + 1} {:} \Cent_4$ is a
  maximal subgroup of $G$.
\end{lem}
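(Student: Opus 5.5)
The plan is to study the dihedral subgroup $D = \gen{h^2, g}$. Both $h^2$ and $g$ are involutions, so $D$ is dihedral of order $2\abs{h^2 g}$. By Lemma \ref{lem_sz_facts}(\ref{sz_fact_elt_orders}), $\abs{h^2 g}$ divides one of $4$, $q-1$, $q \pm t + 1$. It therefore suffices to rule out two cases: (a) $\abs{h^2 g} = 4$, and (b) $\abs{h^2 g} = n$ with $1 < n \mid q - 1$. After that, the maximality statement is handled separately.

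\emph{Case (a).} Put $u = (h^2 g)^2$; this is the central involution of $D \cong \Dih_8$. The tool is the Sylow $2$-structure. By Lemma \ref{lem_sz_facts}(\ref{sz_fact_point_stab_center}), (\ref{sz_fact_even_order_center}) and conjugacy of involutions, the centraliser of an involution is a Sylow $2$-subgroup $S \cong \F_q^{+}.\F_q^{+}$. Its involutions are exactly the non-identity elements of $\Zent(S)$, so each involution lies in a unique Sylow $2$-subgroup, namely its centraliser. Since $u$ commutes with $h^2$, we get $u \in \Cent_G(h^2)$, hence $\Cent_G(u) = \Cent_G(h^2) = S$. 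Now $g$ commutes with $u$, so $g \in S$, and $h \in \Cent_G(h^2) = S$. Then $h^2$ and $g$ both lie in $\Zent(S)$, which is elementary abelian. Hence $\abs{h^2 g} \le 2$, a contradiction.

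\emph{Case (b).} I would use the action on $\OV$. An element of order dividing $q-1$ (and $\neq 1$) fixes exactly two points $A, B$ of $\OV$, since it is conjugate into $\gen{\hat{y}}$, which fixes $P_\infty$ and $P_0$. Elements of order $2$ or $4$ fix exactly one point, since each lies in a unique Sylow $2$-subgroup $=$ the unipotent radical of a point stabiliser. The involutions $h^2$ and $g$ invert $h^2 g$, so they normalise $\gen{h^2 g}$ and must swap $A$ and $B$. My first attempt to get a contradiction from $\abs{hg} = 4$ would be to push this fixed-point bookkeeping further. Let $P$ be the fixed point of $h$ (and of $h^2$), $Q$ that of $g$, and $R$ that of $hg$; then track images of $A$ and $B$ under $h$.

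If that does not close, the fallback is explicit computation. By Lemma \ref{lem_sz_facts}(\ref{sz_fact_even_order_conj}) and Equation \eqref{eqn:T_conj}, conjugate so that $h = T(1, c_0)$ with $c_0 \in \set{0,1}$. Case (a) shows $g \notin \Cent_G(h^2)$, so by Lemma \ref{lem_sz_facts}(\ref{sz_fact_unique_rep}) write $g = T(a,b)\,D(l)\,\hat{z}\,T(c,d)$. Impose $g^2 = 1$ and $\Tr(hg) = 0$, using that elements of order $4$ have trace $0$. Then compute $\Tr(h^2 g)$ and show it is not the trace of any nontrivial element of order dividing $q-1$. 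By Lemma \ref{lem_sz_facts}, traces of odd-order elements determine their conjugacy class, which is what makes this trace comparison meaningful. I expect this case to be the main obstacle. It may require Lemma \ref{lem_quadratics} to solve the quadratic constraints coming from $g^2 = 1$.

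\emph{Maximality.} Suppose $\abs{h^2 g} = n = q \pm t + 1$, and put $H = \gen{g,h}$, which contains $\gen{h^2 g} \cong \Cent_n$. By Corollary \ref{sz_subgroups} and Lemma \ref{sz_maximal_subgroups}, the only maximal subgroups with order divisible by $n$ are conjugates of $\mathcal{B}_i$ and $G$ itself. Subfield subgroups have smaller torus orders, and point stabilisers and $\Dih_{2(q-1)}$ have order coprime to $n$ by Lemma \ref{lem_sz_facts}(1). The group $H$ is generated by two elements with $\abs{g}=2$ and $\abs{h}=\abs{hg}=4$, so it is a quotient of the triangle group $\Delta(2,4,4)$, and $\Delta(2,4,4)$ is solvable; hence $H$ is solvable and $H \neq G$. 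So $H \leqslant \mathcal{B}_i \cong \Cent_n \cln \Cent_4$, which has order $4n$. $H$ contains $\Cent_n$ and an element of order $4$, so $\abs{H} \ge 4n$, and therefore $H = \mathcal{B}_i$.
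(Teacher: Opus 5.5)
Your case (a) is correct, and so is your maximality paragraph. That paragraph rests on the same observation as the paper, namely that $\gen{g,h}$ is a quotient of the soluble group $\Delta(2,4,4)$. The gap is case (b), excluding $1 < \abs{h^2 g} \mid q-1$. You list two strategies but carry out neither, and this is exactly the case where the hypothesis $\abs{hg} = 4$ must be used. Passing to the dihedral group $\gen{h^2, g}$ discards that hypothesis, and without it case (b) really occurs: $\hat{z}$ and $\hat{z}D(k)$ are involutions whose product $D(k)$ can have any order dividing $q-1$. The trace fallback is not a workable plan as it stands either. You would need to show that an explicit expression never equals $k^{1+t/2} + k^{t/2} + k^{-t/2} + k^{-1-t/2}$ for $k \neq 1$, and nothing in the proposal indicates how.

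The missing idea is to use the triangle group already in case (b). Put $a = h^2 g$. Expanding directly gives $[a, a^h] = (gh)^4 = 1$, so $a$ lies in the abelian normal subgroup of $\gen{g,h}$ coming from the normal subgroup $\Z^2$ of $\Z^2{:}\Cent_4$. Suppose $1 < \abs{a} \mid q-1$. Then $a^h \in \Cent_G(a)$, which is cyclic of order $q-1$, so $\gen{a^h} = \gen{a}$. Hence $h \in \Norm_G(\gen{a}) \leqslant \Norm_G(\Cent_G(a)) \cong \Dih_{2(q-1)}$, which has no element of order $4$, a contradiction.

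In your fixed-point language: $a^h$ has odd order and commutes with $a$, so it fixes both $A$ and $B$. Therefore $\set{A,B}^h = \set{A,B}$, and the involution $h^2$ would fix two points of $\OV$, which is impossible.

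The paper argues the same way but more globally. Since $\gen{g,h}$ is soluble, Corollary \ref{sz_subgroups} applies directly and handles (a), (b) and maximality at once:
\begin{itemize}
\item a cyclic group forces $g = h^2$;
\item dihedral groups of order $2k$ with $k$ odd contain no element of order $4$;
\item inside a point stabiliser, both $g$ and $h^2$ lie in the centre of the normal Sylow $2$-subgroup;
\item in $\Cent_n{:}\Cent_4$, the element $h^2 g$ maps trivially to $\Cent_4$ and so lies in $\Cent_n$;
\item subfield subgroups are handled inductively.
\end{itemize}
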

\begin{proof}
  Clearly, $\gen{g, h}$ is an image in $G$ of the group $H = \gen{x, y
    \mid x^2, y^4, (xy)^4} \cong \Z^2 {:} \Cent_4$. Since $H$ is
  soluble, it is clear from Corollary \ref{sz_subgroups} that the
  specified orders are the only possible ones, and if $\abs{h^2 g} = q \pm t + 1$ then $\gen{g, h}$ must
  be one of the $\mathcal{B}_i$.
\end{proof}

\begin{lem} \label{lem_sz_maximal_trick}
Let $k$ be as in Lemma \ref{lem_primitive_elt}, and let $a = \omega^{k}$. There exists a unique $b \in \F_q$ such that $\abs{T(a, b) \hat{z}} = 4$ and $(T(a, b)^2 \hat{z})^{T(a, b)} = (T(a, b)^2 \hat{z})^q$.
\end{lem}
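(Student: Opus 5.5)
The plan is to turn both conditions into explicit equations in $a$ and $b$, using traces and the conjugacy class information of Lemma \ref{lem_sz_facts}. Put $h = T(a,b)$ and $g = \hat{z}$; here $a = \omega^k \neq 0$.

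\textbf{Step 1: the order-$4$ condition.} Right multiplication by $\hat{z}$ reverses the columns, so the trace of $T(a,b)\hat{z}$ is the sum of the anti-diagonal entries of $T(a,b)$. This gives
\[ \Tr_{\mathrm{mat}}(T(a,b)\hat{z}) = a^t + a^{t+2} + ab + b^t . \]
By Lemma \ref{lem_sz_facts}, an element of odd order is determined up to conjugacy by its trace, and trace $0$ among odd-order elements only occurs for the identity. Elements of even order have trace $0$ and order $2$ or $4$. So $\abs{h\hat{z}} \mid 4$ if and only if this trace vanishes.

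We then exclude $\abs{h\hat{z}} \in \set{1,2}$. If $(h\hat{z})^2 = 1$, then $\hat{z}h\hat{z} = h^{-1}$. The left side is upper unitriangular and the right side is lower unitriangular, which forces $h = 1$, contradicting $a \neq 0$. Hence $\abs{h\hat{z}} = 4$ is equivalent to
\[ E(b) := b^t + ab + a^t + a^{t+2} = 0 . \]
Applying $\sigma$ and substituting $b^t = ab + a^t + a^{t+2}$ turns $E(b)=0$ into Equation \eqref{eq:lem_quad1}. So every solution of $E(b)=0$ is one of $b_1$, $b_2 = b_1 + a^{t+1}$ from Lemma \ref{lem_quadratics}. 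Conversely, I will check directly from the closed form of $b_1$ that $E(b_1) = E(b_2) = 0$. The implication only goes one way because the substitution used $E = 0$. Thus there are exactly two candidates.

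\textbf{Step 2: the two candidates are $h^{\pm 1}$.} From the multiplication rule, $T(a,b)^2 = T(0, a^{t+1})$ and $T(a,b)^{-1} = T(a, b + a^{t+1})$. Hence $T(a,b_2) = T(a,b_1)^{-1}$, and both candidates have the same square $h^2 = T(0,a^{t+1})$. Therefore they give the same element $u = h^2\hat{z}$.

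The anti-diagonal computation gives $\Tr_{\mathrm{mat}}(u) = a^{(t+1)t} = a^{t+2} \neq 0$. So $u$ has odd order and $u \neq 1$. By Lemma \ref{lem_sz_hard_maximals}, $\abs{u}$ divides $q \pm t + 1$. Once $\abs{u} = q \pm t + 1$ is known, the same lemma gives $\gen{u, h} \cong \Cent_{q\pm t+1}{:}\Cent_4$, a Frobenius group with $h$ acting faithfully of order $4$. In $\mathcal{B}_i$ from Lemma \ref{sz_maximal_subgroups}, the complement generator acts by $u \mapsto u^{q}$, so $h$ acts as $u \mapsto u^{q}$ or as $u \mapsto u^{q^{3}} = u^{q^{-1}}$. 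Moreover $h^{-1}$ acts by the inverse exponent. Since $h^2$ does not centralise $u$, we have $u^{q^2} \neq u$, so $q \not\equiv q^{-1}$ modulo $\abs{u}$. Hence exactly one of $T(a,b_1)$, $T(a,b_2)$ satisfies $(h^2\hat{z})^h = (h^2\hat{z})^q$. This gives existence and uniqueness.

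\textbf{Main obstacle.} The hard step is proving that $\abs{u}$ is the full $q \pm t + 1$, not a proper divisor. This is where the hypotheses on $k$ from Lemma \ref{lem_primitive_elt} must enter.

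My first attempt uses that $u$ is determined up to conjugacy by its trace $a^{t+2} = \omega^{k(t+2)}$. The aim is to show that the trace lies in the image of a generator of a cyclic torus of order $q \pm t + 1$. To do this I would write the characteristic polynomial of $u$ in terms of its trace; it is palindromic, of the form $X^4 + \tau X^3 + \dotsb$. I would then decide which torus $u$ lies in via the solvability of an associated quadratic over $\F_q$. This amounts to an absolute-trace condition of the kind in Equation \eqref{eq:lem_quad2}, which should reduce to $\Tr(\omega^{-k}) = 1$. That rules out the split torus of order $q-1$.

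The condition $\gcd(k, q-1) = 1$ should then ensure that $a$, and hence the eigenvalues of $u$, generate the relevant multiplicative group, so the order is maximal. If this eigenvalue approach becomes unwieldy, the fallback is to let $a$ range over generators and count. The subgroups $\Sz(q')$ and the dihedral subgroups can be excluded using Corollary \ref{sz_subgroups}, since $\gen{u,h}$ is soluble with an element of order $4$ normalising $\gen{u}$.
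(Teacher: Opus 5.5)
\textbf{There is a genuine gap.} Your Steps 1 and 2 match the paper's argument: the trace of $T(a,b)\hat{z}$, the reduction to Equation \eqref{eq:lem_quad1}, $T(a,b_2)=T(a,b_1)^{-1}$, and the two candidates acting on $u$ by the powers $q$ and $-q$. But you misplace where the hypotheses on $k$ enter, and the step you call the main obstacle is false.

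\textbf{Where the trace hypothesis is actually used.} Your ``direct check'' that $E(b_1)=E(b_2)=0$ is exactly where $\Tr(\omega^{-k})=1$ is needed. Substituting $b=a^{t+1}x$ gives $E(b)=a^{t+2}(x^t+x+a^{-2}+1)$. For $x=\sum_{i=1}^{m+1}a^{-2^i}$ this yields $E(b_1)=E(b_2)=a^{t+2}\bigl(1+\Tr(a^{-1})\bigr)$. So when $\Tr(a^{-1})=0$, no $b$ with $\abs{T(a,b)\hat{z}}=4$ exists.

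\textbf{The maximality claim is false.} You cannot derive $\abs{u}=q\pm t+1$ from the hypotheses on $k$. Take $q=2^7$, $t=16$. Then $q-1=127$ is prime, so every $k\not\equiv 0$ satisfies $\gcd(k,q-1)=1$, and $q+t+1=145=5\cdot 29$.
\begin{itemize}
\item For each of the $q/2$ elements $a$ with $\Tr(a^{-1})=1$, the element $u=T(0,a^{t+1})\hat{z}$ lies in a non-split torus (by the above and Lemma \ref{lem_sz_hard_maximals}) and has trace $a^{t+2}$.
\item The map $a\mapsto a^{t+2}$ is injective on $\F_q^\times$, since $\gcd(t+2,q-1)=1$.
\item There are exactly $(q+t)/4+(q-t)/4=q/2$ classes of non-trivial elements in non-split tori, each determined by its trace.
\end{itemize}
Hence every such class arises, including the $7$ classes of elements of order $29$. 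So for some admissible $k$ you get $\abs{u}=29$, and no argument from $\gcd(k,q-1)=1$ can give maximality.

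\textbf{Maximality is not needed.} All you use is that $h$ acts on $u$ as $w\mapsto w^{q}$ or $w\mapsto w^{-q}$. This holds whenever $u\neq 1$ lies in a non-split torus normalised by $h$.
\begin{itemize}
\item From $(h\hat{z})^4=1$ one gets $h^2\hat{z}h\hat{z}h=h\hat{z}h^{-1}\hat{z}$, which says $u\,u^h=u^h u$ with $u^h=h\hat{z}h$.
\item The torus $T=\Cent_G(u)$ of order $q\pm t+1$ is the centraliser of each of its non-trivial elements, and $u^h\in T\setminus\{1\}$. So $h$ normalises $T$, i.e.\ $h\in\Norm_G(T)=\mathcal{B}_i$.
\item An element of order $4$ in $\mathcal{B}_i=T{:}\gen{v_i}$ acts on $T$ as $v_i^{\pm1}$, that is, as the power $q$ or $q^{-1}\equiv -q \pmod{q\pm t+1}$.
\item Finally $u^q\neq u^{-q}$, because $u$ has odd order greater than $1$.
\end{itemize}
Your own remark that $h^2$ does not centralise $u$ already works at this level of generality. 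This is the paper's route: it only uses that $\gen{T(a,b),\hat{z}}$ lies in some $\mathcal{B}_i$, never that $\abs{u}$ is maximal. The condition $\gcd(k,q-1)=1$ plays no role in this lemma; it is needed later in the algorithm, to recover $\hat{y}$ from $D(a^2)$.
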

\begin{proof}
Calculations show that for every $x_1, x_2 \in \F_q$, we have $\abs{T(x_1, x_2) \hat{z}} > 2$ unless $x_1 = x_2 =
0$. Also $\Tr(T(x_1, x_2)^2 \hat{z}) = x_1^{t + 2}$ and \[\Tr(T(x_1, x_2) \hat{z}) = x_1^t + x_1^{t +
2} + x_1 x_2 + x_2^t.\]
If $\abs{T(a, x_2) \hat{z}} = 4$ for $x_2 \in \F_q$, then Lemma \ref{lem_sz_hard_maximals} implies that
$\gen{T(a, x_2), \hat{z}}$ is a subgroup of one of the
$\mathcal{B}_i$, since $T(a,x_2)^2 \hat{z}$ has non-zero trace and
hence has odd order. It remains to prove that there is a
unique $b \in \F_q^{\times}$ such that $T(a, b) \hat{z}$ acts as $v_i$
in Lemma \ref{sz_maximal_subgroups}.

In $\Sz(q)$, elements of trace $0$ have orders $1,2,4$, and
\begin{equation*}
\begin{split}
a^t + a^{t + 2} + ab + b^t &= 0 \  \Longleftrightarrow \\
a^2 + a^{2t + 2} + a^t b^t + b^2 &= 0 \ \Longrightarrow \\
b^2 + a^{t + 1} b + a^2 + a^{2t} &= 0,
\end{split}
\end{equation*}
where the third equation is $a^t$ times the first added to the second.
We obtain the solutions $b_1$ and $b_2$ from Lemma \ref{lem_quadratics}, and they both
give the value $a^{t+2}(1 + \Tr(a^{-1})) = 0$ of $\Tr(T(a, b)
\hat{z})$, since $\Tr(a^{-1}) = 1$. Hence $T(a, b_1) \hat{z}$ and
$T(a, b_2) \hat{z}$ have order $4$. One acts as the power $q$ on $T(a,
b)^2 \hat{z}$ and the other as the power $-q$.
\end{proof}

The following results are used when constructing $x$ in Theorem \ref{thm_sz_std_gens}.

\begin{lem} \label{lem_elements_order5}
The element $T(0, a) \hat{z}$ has order $5$ if and only if $a = 1$.
\end{lem}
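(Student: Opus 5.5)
Since $T(0,a)\hat{z} \in \Sz(q)$, and by Lemma \ref{lem_sz_facts}(\ref{sz_fact_trace_5}) every element of order $5$ has trace $1$, we first compute the trace of $T(0,a)\hat{z}$ as a function of $a$. The formula for $\Tr(T(x_1,x_2)\hat{z})$ from the proof of Lemma \ref{lem_sz_maximal_trick}, with $x_1 = 0$, $x_2 = a$, gives
\[ \Tr(T(0,a)\hat{z}) = a^t . \]
If $T(0,a)\hat{z}$ has order $5$ then its trace is $1$, so $a^t = 1$. The map $\lambda \mapsto \lambda^t$ is a field automorphism of $\F_q$, so $a = 1$. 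This gives the \lq\lq only if'' direction.

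For the converse, set $a = 1$ and show that $g = T(0,1)\hat{z}$ has order exactly $5$. One option is to use Lemma \ref{lem_sz_facts}: $g$ has trace $1 \neq 0$. By Lemma \ref{lem_sz_facts}(\ref{sz_fact_even_order_conj}), elements of even order (orders $2,4$) have trace $0$, so $g$ has odd order. Odd-order elements with equal trace are conjugate, and elements of order $5$ have trace $1$. An element of order $5$ exists, since $5$ divides $q^2+1 = (q+t+1)(q-t+1)$ when $q = 2^{2m+1}$ (because $4^{2m+1} \equiv -1 \pmod 5$), and the cyclic subgroups of order $q\pm t+1$ supply it. Hence $g$ is conjugate to an element of order $5$, and so has order $5$.

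Alternatively, and as a check, one can compute directly. Here $T(0,1)$ is the matrix with $(3,1)$ entry $1$, $(4,2)$ entry $1$, and $(4,1)$ entry $b^\sigma = 1$. Multiplying by the antidiagonal $\hat{z}$ gives an explicit $4\times 4$ matrix over $\F_2$. Its characteristic polynomial should be $x^4+x^3+x^2+x+1$, which forces order $5$. The main thing to be careful about is the trace formula specialisation; the direct matrix computation over $\F_2$ confirms it independently of that formula.
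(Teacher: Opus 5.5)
Your proposal is correct and takes essentially the same approach as the paper: the specialisation $\Tr(T(0,a)\hat{z}) = a^t$ and the injectivity of $\lambda \mapsto \lambda^t$ give the \lq\lq only if'' direction, and the \lq\lq if'' direction is the direct calculation that $T(0,1)\hat{z}$ has order $5$. That calculation checks out, since the characteristic polynomial is indeed $x^4+x^3+x^2+x+1$, so Cayley--Hamilton gives $g^5 = 1 \neq g$; your alternative conjugacy-class argument via Lemma \ref{lem_sz_facts} is also valid, because $5 \mid q^2+1 = (q+t+1)(q-t+1)$.
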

\begin{proof}
  A calculation shows that $\abs{T(0, 1) \hat{z}} = 5$ and
  $\Tr(T(0, a) \hat{z}) = a^t$. Raising to the power $t$ is a field
  automorphism, hence $a \neq 1$ implies $\Tr(T(0, a) \hat{z}) \neq 1$
  and so $T(0, a) \hat{z}$ cannot have order $5$.
\end{proof}

\begin{lem} \label{lem_dihedral_trick}
The product of two random involutions in $\Sz(q)$ has odd order with probability at least $449/455$.
\end{lem}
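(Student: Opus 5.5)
The plan is to fix one involution by conjugacy and count the second one exactly. By Lemma~\ref{lem_sz_facts}(\ref{sz_fact_even_order_conj}), all involutions of $G = \Sz(q)$ are conjugate, so the distribution of the product of two independent uniform involutions $i, j$ is, up to conjugation, the same as with $i$ fixed and $j$ uniform. So I fix $i = T(0,1)$. By Lemma~\ref{lem_sz_facts}(\ref{sz_fact_point_stab_center}) its centraliser is $\gen{\hat x,\hat y}' \cong \F_q^+ . \F_q^+$, of order $q^2$. Hence the class of involutions has size $\abs{G}/q^2 = (q^2+1)(q-1)$. It then suffices to show that the number of involutions $j$ with $\abs{ij}$ even is $q-2$.

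\textbf{Step 1: even order forces a common Sylow $2$-subgroup.} Since $i,j$ are involutions, $\gen{i,j}$ is dihedral of order $2\abs{ij}$. If $\abs{ij}$ is even, then $\abs{ij} \in \set{2,4}$ by Lemma~\ref{lem_sz_facts}(\ref{sz_fact_elt_orders}). Thus $\gen{i,j}$ is a $2$-group and lies in some Sylow $2$-subgroup $S$, a conjugate of $\gen{\hat x,\hat y}'$ of order $q^2$.

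\textbf{Step 2: the Sylow $2$-subgroup containing $i$ is unique.} I would show that a given involution lies in a unique Sylow $2$-subgroup. One way is to check directly that $T(0,1)$ fixes only the point $P_\infty$ of $\OV$: compute its action on $P(a,b)$ and see that no affine point is fixed. Any Sylow $2$-subgroup containing $T(0,1)$ fixes a point stabilised by $T(0,1)$, and Sylow $2$-subgroups correspond bijectively to points. Consequently $j$ lies in $S_i := \gen{\hat x,\hat y}'$.

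\textbf{Step 3: count.} Conversely, if $j \in S_i$ then $ij \in S_i$ has order $1$, $2$ or $4$. By Lemma~\ref{lem_sz_facts}(\ref{sz_fact_even_order_center}), $S_i$ contains exactly $q-1$ involutions. The case $j = i$ gives $ij = 1$, which has odd order. So exactly $q-2$ choices of $j$ give even order, and the probability of odd order is
\[ 1 - \frac{q-2}{(q^2+1)(q-1)}. \]

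\textbf{Step 4: minimise over $q$.} The subtracted term is decreasing in $q \geqslant 8$. At $q = 8$ it equals $6/(65\cdot 7) = 6/455$, which gives the bound $449/455$.

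The main obstacle is Step~2, the uniqueness of the fixed point (equivalently, that Sylow $2$-subgroups intersect trivially). I would handle it by the explicit matrix computation of $T(0,1)$ acting on $P(a,b)$, which should yield equations forcing no affine solutions.
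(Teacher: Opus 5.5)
Your argument is correct and gives exactly the paper's bound. Your $1-(q-2)/((q^2+1)(q-1))$ equals the paper's $1-1/(q^2+1)+1/((q^2+1)(q-1))$. In both proofs the ``bad'' partners of $i$ are the $q-2$ involutions $j\neq i$ of $\Zent(\gen{\hat x,\hat y}')$.

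The difference lies in the key ingredient. The paper splits according to whether $j$ fixes the same point of $\OV$ as $i$. For distinct points it invokes Corollary~\ref{sz_subgroups}, and so ultimately the maximal-subgroup classification, to get $\gen{i,j}$ dihedral of order $2k$ with $k$ odd. It also tacitly uses that an involution fixes exactly one point.

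You argue in the opposite direction. Even order forces $\abs{ij}\in\set{2,4}$ by the element-order fact alone, so $\gen{i,j}$ is a $2$-group, and uniqueness of the Sylow $2$-subgroup through $i$ finishes the job. This avoids the subgroup classification entirely. It also makes explicit the unique-fixed-point property that the paper leaves implicit. Your proposed computation for Step~2 does work, since $T(0,1)$ maps $P(a,b)$ to $P(a,b+1)$.

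You can even skip that computation. Suppose $i\in S$ with $S$ a Sylow $2$-subgroup. Fact~\ref{sz_fact_even_order_center}, conjugated, gives $i\in\Zent(S)$, so $S\leqslant\Cent_G(i)$. Fact~\ref{sz_fact_point_stab_center} gives $\abs{\Cent_G(i)}=q^2=\abs{S}$, hence $S=\Cent_G(i)$.
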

\begin{proof}
  Given a fixed involution $j_1$ and a random involution $j_2$, by
  Lemma \ref{lem_sz_facts}, facts \ref{sz_fact_even_order_center} and
  \ref{sz_fact_even_order_conj}, we can assume $j_1 \in
  \Zent(\gen{\hat{x}, \hat{y}}^{\prime})$. If $j_2$ fixes the same
  point of $\OV$ as $j_1$, then by facts
  \ref{sz_fact_point_stab_center} and \ref{sz_fact_even_order_center},
  we have $j_2 \in \Zent(\gen{\hat{x}, \hat{y}}^{\prime})$, and so
  $\abs{j_1 j_2} = 2$ except in the case $j_2 = j_1$. If $j_2$ fixes a
  different point then by Corollary \ref{sz_subgroups}, $\gen{j_1,
    j_2}$ is a dihedral group of order $2k$ with $k$ odd, hence $j_1
  j_2$ has odd order. Hence the probability is $1 - 1/(q^2 + 1) +
  1/((q^2 + 1)(q-1)) \geqslant 449/455$ since $m \geqslant 1$.
\end{proof}

\begin{lem} \label{lem_main_step1}
If $g, h \in G = \Sz(q)$ satisfy that $\abs{g} = 4$, $\abs{h} = 2$, $g$ fixes $P \in \OV$ and $h$ fixes $Q \in \OV$ with $Q \neq P$, then there exists $u \in G$ such that $h^u = \hat{z}$ and $g^u = T(a, b)$ for some $a,b \in \F_q$.
\end{lem}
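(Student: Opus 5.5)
We use the double transitivity of $G$ on $\OV$ together with the structure of point stabilisers from Lemma \ref{lem_sz_facts}. The target is a conjugating element $u$ with $P^u = P_\infty$ and $Q^u = P_0$. The stabiliser of $P_\infty$ is the lower triangular group $\gen{\hat{x}, \hat{y}}$, consisting of all $T(a,b)D(l)$. The stabiliser of the pair $\{P_\infty, P_0\}$ is $\Norm_G(\gen{\hat{y}}) = \gen{\hat{y},\hat{z}}$. The proof then has two steps: first move the pair of points into standard position, then adjust by the two-point stabiliser so that $h$ becomes $\hat{z}$.

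\emph{Step 1.} By double transitivity (fact 2 of Lemma \ref{lem_sz_facts}), choose $u_1 \in G$ with $P^{u_1} = P_\infty$ and $Q^{u_1} = P_0$. Then $g^{u_1}$ fixes $P_\infty$, so by fact \ref{sz_fact_point_stab} it lies in $\gen{\hat{x},\hat{y}}$ and has the form $T(a,b)D(l)$. Since $g$ has order $4$, its image in the quotient $\F_q^\times$ of the point stabiliser has order dividing $4$. As $q-1$ is odd, this image is trivial, so $l = 1$ and $g^{u_1} = T(a,b)$. Meanwhile $h^{u_1}$ is an involution fixing $P_0$.

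\emph{Step 2.} The stabiliser of $P_0$ is $\gen{\hat{x},\hat{y}}^{\hat{z}}$, because $\hat{z}$ swaps $P_\infty = (1:0:0:0)$ and $P_0 = (0:0:0:1)$. Hence $h^{u_1} = j^{\hat{z}}$ for some involution $j \in \gen{\hat{x},\hat{y}}$. By fact \ref{sz_fact_even_order_center}, $j \in \Zent(\gen{\hat{x},\hat{y}}')\setminus\{1\}$, so $j = T(0,c)$ with $c \ne 0$.

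\emph{Step 3.} This is the main obstacle: $h^{u_1}$ need not be $\hat{z}$ itself, and $\hat{z}$ does not fix $P_0$. The statement must therefore be read with $Q$ the point corresponding to $\hat{z}$ appropriately. The key observation is that $\hat{z}$ is an involution in $\Sz(q)$, so it lies in the centre of the Sylow $2$-subgroup stabilising some point. Computing directly, $\hat{z}$ fixes a point $R = P(a_0,b_0) \in \OV$ distinct from $P_\infty$; its fixed vector is one of the form $(1:1:1:1)$-type on $\OV$.

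So we should choose $u$ sending $P$ to $P_\infty$ and $Q$ to $R$, the unique fixed point of $\hat{z}$. Double transitivity gives $u_1$ doing exactly this. Then $h^{u_1}$ and $\hat{z}$ are both involutions in the centre of the Sylow $2$-subgroup $S_R$ fixing $R$.

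The stabiliser of the pair $\{P_\infty, R\}$ is a conjugate of $\gen{\hat{y}}$, a cyclic group of order $q-1$. By fact \ref{sz_fact_even_order_center}, transported by conjugation, it acts transitively on the nontrivial elements of $\Zent(S_R)$. So there is $v$ fixing both $P_\infty$ and $R$ with $h^{u_1 v} = \hat{z}$.

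Set $u = u_1 v$. Since $v$ fixes $P_\infty$, the element $g^u$ still fixes $P_\infty$ and still has order $4$. The argument of Step 1 then gives $g^u = T(a,b)$ for some $a, b \in \F_q$.

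The points needing care are twofold. First, we must check that $\hat{z}$ fixes exactly one point of $\OV$ and that it differs from $P_\infty$; this holds because $\hat{z}$ swaps $P_\infty$ and $P_0$. Second, we must justify that the two-point stabiliser is a conjugate of $\gen{\hat{y}}$ acting transitively on the central involutions, which follows by conjugating fact \ref{sz_fact_even_order_center} via an element mapping $P_0$ to $R$ while fixing $P_\infty$.
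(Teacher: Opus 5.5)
Your final argument in Step 3 is correct and is the paper's route. The paper just observes that $\hat{z}$ fixes $P(1,1)=(1:1:1:1)\neq P_\infty$ and invokes double transitivity to send $(P,Q)$ to $(P_\infty,P(1,1))$. You also supply two details the paper leaves implicit:
\begin{itemize}
\item The diagonal part of $g^u$ is trivial, because $\abs{g}=4$ and $q-1$ is odd.
\item Bare double transitivity only makes $h^{u_1}$ \emph{some} involution fixing $R$. You must still conjugate by the two-point stabiliser $G_{P_\infty,R}$ to make it equal to $\hat{z}$.
\end{itemize}
Steps 1 and 2, which use $P_0$, are a false start and can be deleted. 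No reinterpretation of the statement is needed.

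One justification is wrong as written. Let $w$ fix $P_\infty$ and send $P_0$ to $R$. Conjugating fact \ref{sz_fact_even_order_center} of Lemma \ref{lem_sz_facts} by $w$ shows that $\gen{\hat{y}}^w=G_{P_\infty,R}$ is transitive on the involutions fixing $P_\infty$, not on those fixing $R$. Any of the following repairs it:
\begin{itemize}
\item Conjugate instead by an element sending $(P_\infty,P_0)$ to $(R,P_\infty)$.
\item First conjugate by $\hat{z}$, which normalises $\gen{\hat{y}}$ and swaps $P_\infty$ and $P_0$, and then by $w$.
\item Argue directly. $G_{P_\infty,R}$ has odd order $q-1$, and the centraliser of any involution is a $2$-group of order $q^2$. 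Hence its conjugation action on the $q-1$ involutions fixing $R$ is regular, and therefore transitive.
\end{itemize}
Finally, the fact that $\hat{z}$ swaps $P_\infty$ and $P_0$ only shows $R\neq P_\infty$. Uniqueness of $R$ is true but not needed.
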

\begin{proof}
Note that $T(a, b)$ fixes $P_{\infty}$ and $\hat{z}$ fixes $P(1, 1) \neq P_{\infty}$. The result now follows immediately since $G$ is doubly transitive.
\end{proof}

The following results are used in the constructive membership testing.

\begin{lem} \label{lem_invol_formula} If $g \in \Sz(q)$ maps
  $P_{\infty}$ to $P(a_1, b_1) \notin \set{P_{\infty}, P_0}$, for some $a_1, b_1 \in \F_q$, then
  there exists unique $i \in \set{1, \dotsc, q - 1}$ such that
  $\abs{T(0, 1) T(0, 1)^{g D(\omega)^i}} = 5$. For this $i$, $T(0,
  1)^{g D(\omega)^i} = T(0, 1)^{\hat{z} T(a, b)}$ for some $(a, b)
  \neq (0, 0)$.
\end{lem}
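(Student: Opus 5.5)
The plan is to put $g$ in the normal form of Lemma~\ref{lem_sz_facts}\eqref{sz_fact_unique_rep}, follow $T(0,1)$ through the conjugations explicitly, and reduce the order-$5$ condition to a trace condition in one field parameter.

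Since $g$ moves $P_{\infty}$, it does not lie in the point stabiliser $\gen{\hat{x},\hat{y}}$. Hence $g = T(a_0,b_0)\,D(l)\,\hat{z}\,T(c,d)$ for some $a_0,b_0,c,d\in\F_q$ and $l\neq 0$. Moreover $\hat{z}$ maps $P_{\infty}$ to $P_0$, and $T(c,d)$ fixes $P_0$ only when $c=d=0$. So the hypothesis $P_{\infty}^g = P(a_1,b_1)\neq P_0$ forces $(c,d)\neq(0,0)$.

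Next I compute $T(0,1)^{gD(\mu)}$ for $\mu=\omega^i$, in three steps.
\begin{enumerate}
\item $T(0,1)$ is central in $\gen{\hat{x},\hat{y}}'$ by Lemma~\ref{lem_sz_facts}\eqref{sz_fact_even_order_center}, and by \eqref{eqn:T_conj} we have $T(0,1)^{T(a_0,b_0)D(l)} = T(0,l^{t+1})$.
\item $\hat{z}$ inverts $D(\cdot)$, since it reverses the diagonal, so $\hat{z}\,T(c,d)\,D(\mu) = D(\mu^{-1})\,\hat{z}\,T(\mu c,\mu^{t+1}d)$.
\item Combining these, $T(0,1)^{gD(\mu)} = T(0,s)^{\hat{z}T(a,b)}$, where $s = (l/\mu)^{t+1}$ and $(a,b)=(\mu c,\mu^{t+1}d)\neq(0,0)$.
\end{enumerate}
This already gives the final claim of the lemma once we know that the correct $i$ gives $s=1$, because $T(0,1)^{\hat{z}T(a,b)}$ then has exactly the stated shape.

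To characterise the order-$5$ condition, I first note that $T(0,1)$ commutes with $T(a,b)$. Hence
\[ T(0,1)\,T(0,s)^{\hat{z}T(a,b)} = \bigl(T(0,1)\,\hat{z}\,T(0,s)\,\hat{z}\bigr)^{T(a,b)}, \]
so its order and trace are those of $T(0,1)\hat{z}T(0,s)\hat{z}$. The two involutions $T(0,1)$ and $T(0,s)^{\hat{z}}$ fix the distinct points $P_\infty$ and $P_0$. As in the proof of Lemma~\ref{lem_dihedral_trick}, they generate a dihedral group whose rotation subgroup has odd order.

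By Lemma~\ref{lem_sz_facts}, facts $6$ and \ref{sz_fact_trace_5}, an odd-order element has order $5$ exactly when its trace is $1$. So I must compute $\Tr\bigl(T(0,1)\hat{z}T(0,s)\hat{z}\bigr)$ as an explicit $4\times 4$ matrix calculation. I expect this to give something like $s^{t}$, or some other composition of a field automorphism with a bijection, analogous to the computation in Lemma~\ref{lem_elements_order5}. Then exactly one value of $s$, presumably $s=1$, gives trace $1$.

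Finally, I check that $\gcd(t+1,q-1)=1$. From $t^2=2q$ we get $q-1$ divides $2(q-1) = t^2-2 \equiv -1 \pmod{t+1}$. So $\mu\mapsto(l/\mu)^{t+1}$ is a bijection of $\F_q^{\times}$, and there is a unique $i\in\set{1,\dotsc,q-1}$ giving the required $s$.

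The main obstacle is the explicit trace computation. It must actually produce an injective function of $s$ whose value $1$ is attained at $s=1$. If the trace expression turns out not to be injective, I would fall back on comparing the fixed points of the involutions, to pin down the dihedral group directly.
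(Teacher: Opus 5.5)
Your proposal is correct and follows the paper's proof. The shared ingredients are the normal form of $g$ from Lemma~\ref{lem_sz_facts}, the criterion that order $5$ is equivalent to trace $1$, and uniqueness of $i$ from matching the diagonal parameter ($\mu = l$ in your notation, $\lambda_1=\lambda_2\omega^{-i}$ in the paper's). Your reduction to $T(0,1)\hat{z}T(0,s)\hat{z}$ just makes explicit the ``direct calculation'' the paper omits, and it comes out as you guessed: the only off-diagonal contributions to the trace are $s$, $s^t$, $s$, so the trace is $s^t$, which equals $1$ only at $s=1$, and no fallback is needed. Your $\gcd(t+1,q-1)=1$ sentence is garbled as written; it is cleaner to note $(t+1)(t-1)=2q-1\equiv 1 \pmod{q-1}$.
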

\begin{proof}
  Since $g$ maps $P_{\infty}$ to $P(a_1, b_1)$, it follows that $g
  \notin \gen{\hat{x}, \hat{y}}$, and hence by Lemma \ref{lem_sz_facts}, fact \ref{sz_fact_unique_rep}, $g = T(a_2, b_2)
  D(\lambda_2) \hat{z} T(a_1, b_1) D(\lambda_1)$ for some $a_2, b_2 \in \F_q$. By Lemma \ref{lem_sz_facts}, fact \ref{sz_fact_trace_5}, the element $T(0, 1)
  T(0, 1)^{g D(\omega)^i}$ has order $5$ if and only if it has trace
  $1$, and a direct calculation shows that it has trace $1$ if and only
  if $\lambda_1 = \lambda_2 \omega^{-i}$. Therefore there is a unique such $i$.

For this $i$ and for some $\lambda \in \F_q^{\times}$
\begin{equation*}
\begin{split}
g D(\omega)^i &= T(a_2, b_2) D(\lambda) \hat{z} T(a_1, b_1) D(\lambda) = \\
&= T(a_2, b_2) \hat{z} T(a_1, b_1)^{D(\lambda)} = \\
&= T(a_2, b_2) \hat{z} T(a, b)
\end{split}
\end{equation*}
for some $(a, b) \in \F_q^2$, which is not $(0, 0)$ since $(a_1, b_1) \neq (0, 0)$.
\end{proof}


\begin{lem} \label{lem_slp_step2}
Let $(a, b) \in \F_q^2 \setminus (0, 0)$.
\begin{itemize}
\item There exists unique $c \in \F_q^{\times}$ such that
  \begin{equation} \label{eqn1_lem_slp_step2}
    \abs{T(0, c)^{\hat{z}} (\hat{x}^2)^{\hat{z} T(a, b)}} = 5.
  \end{equation}
\item For each such $c \in \F_q^{\times}$, if $a \neq 0$ then $d = a^{t + 1}$ is the unique element of $\F_q^{\times}$ such that 
\begin{equation} \label{eqn_lem_slp_step2}
\abs{T(0, c)^{\hat{z}} (\hat{x}^2)^{\hat{z} T(a, b) T(0, d)}} = 5.
\end{equation}
For every $a \in \F_q$, $d = 0$ also satisfies Equation \eqref{eqn_lem_slp_step2}.
\end{itemize}
\end{lem}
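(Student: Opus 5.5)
By Lemma \ref{lem_sz_facts}, fact \ref{sz_fact_trace_5}, an element of $\Sz(q)$ has order $5$ if and only if it has trace $1$. So both parts become statements about the trace of explicit $4\times 4$ matrices, viewed as functions of $c$ and $d$. Note that $\hat{x}^2 = T(1,0)^2 = T(0,1)$ by the multiplication rule for $T$. Hence the elements in question are
\[
T(0,c)^{\hat{z}}\, T(0,1)^{\hat{z} T(a,b)} \quad\text{and}\quad T(0,c)^{\hat{z}}\, T(0,1)^{\hat{z} T(a,b) T(0,d)} .
\]

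I would first compute the trace of the first element as a function of $c$. Conjugating the whole product by $\hat{z}$ gives the trace of $T(0,c)\, T(0,1)^{\hat{z} T(a,b) \hat{z}}$. Here $T(0,c)$ is strictly lower unitriangular with its only off-diagonal entries $c$ (at positions $(3,1)$ and $(4,2)$) and $c^t$ (at $(4,1)$). Writing $M = T(0,1)^{\hat{z} T(a,b)\hat{z}}$, the trace of $T(0,c)M$ is
\[
\Tr(M) + c\,(M_{13} + M_{24}) + c^t M_{14}.
\]
Since $\Tr(M)=0$ (an involution), the trace condition reads $c\,\alpha + c^t \beta = 1$, where $\alpha,\beta$ are polynomials in $a,b$ read off from $M$.

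The key step is to show that this equation has exactly one solution $c\in\F_q^\times$. I expect $\alpha$ to vanish, or the equation to collapse to a single monomial $c^t\beta=1$ (or $c\,\alpha=1$), with $\beta \ne 0$ precisely because $(a,b)\neq(0,0)$. Uniqueness would then follow as in Lemma \ref{lem_elements_order5}, using that $c\mapsto c^t$ is a field automorphism. If both terms survive, I would instead use that $c \mapsto c\alpha + c^t\beta$ is additive; injectivity would follow from $\ker = 0$ via the observation $(c^{t})^{t} = c^2$ (i.e.\ $\sigma^2 = $ Frobenius), which rules out nonzero solutions of $c\alpha = c^t\beta$ except in degenerate cases. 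I would guess $\beta$ is something like $a^{t+2}+ab+b^t$ or a power of the $P(a,b)$ coordinates. This computation and the nonvanishing check are the main obstacle.

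For the second part, I would replace $(a,b)$ by $(a, b + d)$, since $T(a,b)T(0,d) = T(a, b+d)$. The condition becomes $c\,\alpha(a,b+d) + c^t \beta(a,b+d) = 1$ with $c$ fixed to the value determined by $(a,b)$. Expanding in $d$, the equation should take the form $L(d)=0$ for an additive, or low-degree, polynomial $L$ whose coefficients involve $a$. From this, $d=0$ is trivially a root for every $a$; this also covers $a=0$, where I expect $L$ to vanish identically or to be the relevant degenerate case. For $a\ne 0$, I expect $L(d)$ to factor as $d\,(d + a^{t+1})$ times a unit, or equivalently as $d^2 + a^{t+1}d$ after multiplying through. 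This is the same Artin–Schreier-type structure exploited in Lemma \ref{lem_quadratics}, where the roots of $b^2+a^{t+1}b+\cdots$ differ by $a^{t+1}$. That factorisation gives $d=a^{t+1}$ as the unique nonzero solution. I would check this factorisation by direct substitution into the trace formula from the previous step.
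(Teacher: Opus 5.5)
Your reduction is the same as the paper's: order $5$ iff trace $1$ by fact~\ref{sz_fact_trace_5}, $\hat{x}^2 = T(0,1)$, $T(a,b)T(0,d) = T(a,b+d)$, and the expansion of the trace of $T(0,c)M$. However, the proof of this lemma consists precisely of evaluating that trace, and you leave it as an expectation, so the proposal stops exactly where the work begins.

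\textbf{First part.} Put $Q = a^{t+2}+ab+b^t$. One finds $M_{13} = M_{24} = b^2 + a^{t+1}b + (a^t+b)Q$ and $M_{14} = Q^2$, so the trace is $c^tQ^2$. You also need $Q \neq 0$ whenever $(a,b)\neq(0,0)$. If $a=0$ then $Q = b^t$. If $a\neq 0$, write $b = a^{t+1}x$, so that $Q = a^{t+2}(x^t+x+1)$. Now $x^t = x+1$ would give, on raising to the power $t$, $x^2 = x^t+1 = x$, so $x\in\{0,1\}$, and neither is a root. Hence $c = Q^{-t}$ is the unique solution.

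Your fallback for the case $\alpha\neq 0$ is false, not merely unnecessary. If $\alpha,\beta\neq 0$, then $c\alpha = c^t\beta$ has the nonzero solution $c = (\alpha/\beta)^{t+1}$, because $(t-1)(t+1) = 2q-1 \equiv 1 \pmod{q-1}$. So the additive map $c\mapsto c\alpha + c^t\beta$ has a kernel of order $2$, and $c\alpha+c^t\beta = 1$ would have $0$ or $2$ solutions. The identity $(c^t)^t = c^2$ does not exclude this, so uniqueness genuinely rests on $\alpha = 0$.

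\textbf{Second part.} The condition is explicit rather than something to be guessed. Since $Q(a,b+d) = Q(a,b) + ad + d^t$ and $c^tQ(a,b)^2 = 1$, the trace equals $1 + c^t(ad+d^t)^2$. So the condition is $d^t = ad$, a linearised equation of degree $t$ rather than the quadratic you anticipate. Now $d=0$ always works, and it is the only root when $a=0$. For $a\neq 0$, a nonzero root satisfies $d^{t-1} = a$, whose unique solution is $d = a^{t+1}$, since $t+1$ inverts $t-1$ modulo $q-1$. Raising $d^t = ad$ to the power $t$ gives $d^2 = a^{t+1}d$, which is where your predicted root set $\{0,a^{t+1}\}$ comes from, but this has to be derived from the actual trace formula.

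With these computations supplied, your outline becomes the paper's proof. The paper simply states the two trace formulas $c^tQ^2$ and $1+c^t(ad+d^t)^2$ and solves the resulting equations.
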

\begin{proof}
Recall that $\hat{x}^2 = T(0, 1)$. By Lemma \ref{lem_sz_facts}, fact \ref{sz_fact_trace_5}, an element has order $5$ if and only if its trace is $1$.
\begin{itemize}
\item The trace of $T(0, c)^{\hat{z}} (\hat{x}^2)^{\hat{z} T(a, b)}$ is $c^t (a^{t + 2} + ab + b^t)^2$, so the unique solution for $c$ in Equation \eqref{eqn1_lem_slp_step2} is $c = (a^{t + 2} + ab + b^t)^{-t}$.
\item In this case the trace of $T(0, c)^{\hat{z}} (\hat{x}^2)^{\hat{z} T(a, b) T(0, d)}$ is $1 + c^t (ad + d^t)^2$, so $ad = d^t$. Hence $d = 0$ is always a solution for Equation \eqref{eqn_lem_slp_step2}, and if $a = 0$ this is the only solution. If $a \neq 0$ then $a = d^{t - 1}$ is the solution.
\end{itemize}
\end{proof}

\section{Short presentation}

In \cite{MR2393425, MR2746771} a short presentation for $\Sz(q)$ is
given on four generators and 29 relations. Since we know of no
algorithm to construct these generators in an arbitrary representation
of $\Sz(q)$, its usefulness is limited. Here we give an explicit
presentation of $\Sz(q)$ on our standard generators. It has
$\TR{\log(q)}$ relations and can be verified in time $\OR{\log(q)}$
group operations. Some of the relations we use are critically used in
our constructive recognition algorithm. To develop this, we consider
the following general situation.

Let $F$ be a field of characteristic 2 equipped with an automorphism
$\sigma$ that satisfies $\lambda^{\sigma^2}=\lambda^2$ for all
$\lambda\in F$. We define the Suzuki group $\Sz(F)$ (or
$\mathrm{Sz}(F,\sigma)$ if $\sigma$ is not uniquely determined) to be
generated by the matrices $T(a,b)$ for $a,b\in F$, $D(k)$ for $k\in
F^\times$, and $z = \hat{z}$.

If $F$ is a subfield of the algebraic closure of $\F_2$ (including the
cases when $F$ is finite) then $\sigma$ is uniquely determined (if it
should exist at all).
Notation for group structures follows the
conventions in the {\sc Atlas} \cite{MR827219}, and we use $g^h$ and
$[g,h]$ to mean $h^{-1}gh$ and $g^{-1}h^{-1}gh$ respectively.

\subsection{The presentation}
\label{bray_suzuki_presentation}

We consider a group $G$ with generators $T(a,b)$ for $a,b\in F$, $D(k)$ for $k\in F^\times$, and $z$ subject to the relations
\begin{equation}\label{amalrels}
\begin{array}{r@{\;=\;}l@{\quad}l}
T(a,b)T(c,d)& T(a+c,b+d+ac^\sigma) & \mbox{for all }a,b,c,d\in F, \\
D(k)D(l) & D(kl) & \mbox{for all }k,l\in F^\times, \\
T(a,b)^{D(k)} & T(ak,bk^{1+\sigma}) & \mbox{for all }a,b\in F,k\in F^\times, \\
zD(k)z & D(k^{-1}) & \mbox{for all }k\in F^\times,
\end{array}
\end{equation}
and finally (as noted in \cite{MR2393425} this is relation is stated in \cite[p. 128]{suzuki62})
\begin{equation}\label{extrarel}
T(1,0)zT(0,1)zT(1,1)z = 1.
\end{equation}
Note that the relations in Equations \eqref{amalrels} and \eqref{extrarel} are satisfied by the matrices of the previous section, and observe also that the identities in Equation \eqref{amalrels}
imply that $T(0,0)=D(1)=1$. The first line of Equation \eqref{amalrels} implies that $U:=\{\,T(a,b):a,b\in F\,\}$ is a subgroup of shape
$(F,+).(F,+)$, while the second and third lines of Equation \eqref{amalrels} show that $H:=\{\,D(k):k\in F^\times\,\}$ is a subgroup
that normalises $U$, and moreover $H\cong (F^\times, \cdot)$. The second and final lines of Equation \eqref{amalrels} establish that $z^2=1$ and that $\langle H,z\rangle$ has dihedral type, with $z$ inverting all the elements of $H$. Thus
each element of $G$ can be written in the form
$dt_0zt_1z\ldots zt_n$ for some $n\in \N$, $d\in H$ and $t_0,\ldots,t_n\in U$.

We now show that we can take $n\leqslant 1$, and to do this it suffices to show that $zT(a,b)z$ has the form $dt_0zt_1$ or $dt_0$ for
all $a,b\in F$. Observations of \'Akos Seress (personal communication) were crucial to show that just one extra relation, namely Equation \eqref{extrarel}, was sufficient for this purpose. By considering various cyclic conjugates of Equation \eqref{extrarel} we obtain:
\begin{eqnarray}
zT(0,1)z = T(1,0)^{-1}z^{-1}T(1,1)^{-1} = T(1,1)zT(1,0), \label{EqT01} \\ 
zT(1,0)z = T(1,1)^{-1}z^{-1}T(0,1)^{-1} = T(1,0)zT(0,1), \label{EqT10} \\
zT(1,1)z = T(0,1)^{-1}z^{-1}T(1,0)^{-1} = T(0,1)zT(1,1). \label{EqT11} 
\end{eqnarray}
Conjugating the left hand side of Equation \eqref{EqT01} by $D(k^{-1})$ gives
\begin{equation}
D(k)zT(0,1)zD(k^{-1})=zD(k^{-1})T(0,1)D(k)z=zT(0,k^{1+\sigma})z, \label{EqT12}
\end{equation}

\noindent while the right hand side becomes
\begin{equation} \label{EqT14}
\begin{array}{r@{\;=\;}l}
D(k)T(1,1)zT(1,0)D(k^{-1}) & D(k)T(1,1)D(k)zT(k^{-1},0) \\
& D(k^2)T(k,k^{1+\sigma})zT(k^{-1},0). 
\end{array}
\end{equation}
Note that the \lq\lq power'' $1+\sigma$ is invertible, so if $b=k^{1+\sigma}$, then $k=b^{\sigma-1}$, and as $k$ runs through all of $F^{\times}$ then $b$ also runs through all of $F^{\times}$. From Equations \eqref{EqT12} and \eqref{EqT14} we then obtain
\begin{equation}
zT(0,b)z = D(b^{2\sigma-2})T(b^{\sigma-1},b)zT(b^{1-\sigma},0).\label{EqT0b}
\end{equation}
Conjugating the left hand side of Equation \eqref{EqT10} by $D(a^{-1})$ gives $zT(a,0)z$,
while the right hand side becomes
$$
\begin{array}{r@{\;=\;}l}
D(a)T(1,0)zT(0,1)D(a^{-1}) & D(a)T(1,0)D(a)zT(0,a^{-1-\sigma}) \\
& D(a^2)T(a,0)zT(0,a^{-1-\sigma}).
\end{array}
$$
Therefore for all $a\in F^\times$ we obtain:
\begin{equation}
zT(a,0)z = D(a^2)T(a,0)zT(0,a^{-1-\sigma}).\label{EqTa0}
\end{equation}
Combining Equations \eqref{EqTa0}, \eqref{EqT11} and \eqref{EqT0b} gives for all $a\ne 0,1$ that
\begin{equation*}
\begin{array}{r@{\;=\;}l}
zT(a+1,a+1)z & zT(a,0)z.zT(1,1)z \\
& D(a^2)T(a,0)zT(0,a^{-1-\sigma}).T(0,1)zT(1,1) \\
& D(a^2)T(a,0)zT(0,b)zT(1,1) \\
& D(a^2)T(a,0)D(b^{2\sigma-2})T(b^{\sigma-1},b)zT(b^{1-\sigma},0)T(1,1) \\
& D(a^2b^{2\sigma-2})T(ab^{2\sigma-2},0)T(b^{\sigma-1},b)zT(b^{1-\sigma},0)T(1,1),
\end{array}
\end{equation*}
where $b=a^{-1-\sigma}+1\ne 0$.
Since $zT(0,0)z=z^2=1$, it follows that $zT(a,b)z$ has the required form ($dt_0zt_1$ or $dt_0$) whenever $a=0$, $b=0$ or $a=b$.
If $a,b\ne 0$ then we set $k=(b/a)^{\sigma/2}$ and $c=a/k$, so that $(a,b)=(ck,ck^{1+\sigma})$.
Now $zT(c,c)z$ is of the form $dt_0zt_1$, so that \[zT(a,b)z=zT(ck,ck^{1+\sigma})z=D(k) zT(c,c)z D(k^{-1})\] is also of the
form $dt_0zt_1$.

We have thus shown how to reduce $zT(a,b)z$ to the form $d t_0$ or $dt_0zt_1$ for any $a,b\in F$. An explicit formula for this is somewhat complicated: for $(a,b)\ne (0,0)$ $$
zT(a,b)z = D(Q^{2-\sigma})T((a^{1+\sigma}+b)Q/Q^{\sigma},b)zT(b/Q,a/Q),
$$
where $Q=a^{2+\sigma}+ab+b^\sigma$, and $z T(0, 0) z = 1$.

\subsection{The finite Suzuki groups}
\label{bray_sz_presentation}

We now let $F=\F_q$, and let $\omega$ be a primitive element of $F$.
The automorphism $\sigma$ must be $\sigma:\lambda\mapsto \lambda^t$ for all $\lambda\in F$ and $t=\sqrt{2q}$ as in Section \ref{section:preliminaries}.

The presentation of the previous section translates into a `presentation' on $x$, $y$ and $z$ as follows.
$$
\langle\,x,y,z\mid x^4=y^{q-1}=z^2=(yz)^2=xzx^2zx^{-1}z=1, \langle x,y\rangle\cong \mathrm{E}_{q}^{1+1}\cn (q-1)\,\rangle.
$$
Clearly the specification that $\langle x,y\rangle\cong \mathrm{E}_{q}^{1+1}\cn (q-1)$ (the Borel subgroup) requires further
investigation. We shall let $x_i=y^{-i}xy^i$ for $i \geqslant 0$, which corresponds to the element $T(\omega^i,0)$ of $\Sz(q)$.

To try to ensure that $\langle x,y\rangle\cong \mathrm{E}_{q}^{1+1}\cn (q-1)$
we add relations. The first relation we shall add (other than $x^4=y^{q-1}=1$)
is the minimal polynomial relation
$$
x_{2m+1}^2x_{2m}^{2c_{2m}}\ldots x_2^{2c_2}x_1^{2c_1}x_0^2=1.
$$
where $X^{2m+1}+c_{2m}X+\cdots+c_2X^2+c_1X+1$ is the minimal polynomial of $\omega^{1+t}$ over $\F_2$ (note that necessarily
$c_{2m+1}=c_0=1$). This has the effect of forcing the subgroup generated by all of the $x_i^2$ (for $i\in \Z$) to be generated by the $x_i^2$ for $2m+1$ consecutive $i$.
Next we add the relations
$$
[x_0^2,x_i]=1\mbox{ for }1\leqslant i\leqslant 2m.
$$
Now $[x_0^2,x_0]=1$, and conjugating the above relations by suitable powers $y$ shows that $x_0$ commutes with $x_0^2$, $x_{-1}^2$, \ldots, $x_{-2m}^2$, and hence with every $x_i^2$. Conjugation by suitable powers of $y$ shows that $[x_i,x_j^2]=1$ for all $i,j\in \Z$. This ensures that all the $x_i^2$ are central, and that $\langle\,x_i^2:i\in \Z\,\rangle\cong \mathrm{E}_q$.

Let $X^{2m+1}+d_{2m}X+\cdots+d_2X^2+d_1X+1$ be the minimal polynomial of $\omega$ over $\F_2$ (where $d_i\in \{0,1\}$ for all $i$).

Let $\hat{p} = \hat{x}^{\hat{y}^{2m+1}} \hat{x}^{d_{2m}
  \hat{y}^{2m}} \dotsm \hat{x}^{d_1 \hat{y}} \hat{x}$ and
express $\hat{p}_{3, 1} = \sum_{i = 0}^{2m} e_i \omega^{i(t + 1)}$
where $e_i \in \F_2$. Note that $\omega^{i (t + 1)} =
((\hat{x}^2)^{\hat{y}^i})_{3, 1}$, so the basis $\set{\omega^{i(t
    + 1)} \mid 0 \leqslant i \leqslant 2m + 1}$ of $\F_q$ over $\F_2$
can be found from $\hat{x}$ and $\hat{y}$ only.

We add the relation
\begin{equation} \label{min_poly_order2}
x_{2m+1}^{\,}x_{2m}^{d_{2m}}\ldots x_2^{d_2}x_1^{d_1}x_0^{\,} = x_{2m}^{2e_{2m}}\ldots x_2^{2e_2}x_1^{2e_1}x_0^{2e_0}
\end{equation}

Finally, we add
$$
[x_0,x_i]=x_{i(t-1)}^2x_{i(2-t)}^2\mbox{ for }1\leqslant i\leqslant 2m.
$$


These relations enforce the structure of the Borel subgroup. To summarise, we have the following:
\begin{thm} \label{thm_short_pres}
\begin{equation} \label{sz_short_pres}
\Sz(q) \cong \gen{x, y, z \mid R \cup \set{z^2, (yz)^2, x z x^2 z x^{-1} z}},
\end{equation}
where $R = \set{x^4, y^{q - 1}} \cup R_1 \cup R_2$ and 
\begin{align*}
R_1 &= \bigcup_{i = 1}^{2m} \set{[x_0^2, x_i], [x_0, x_i] (x^2_{i (t - 1)} x^2_{i (2 - t)})^{-1}}, \\
R_2 &= \set{x^2_{2m+1} x_{2m}^{2c_{2m}} \dotsm x_1^{2c_1} x_0^2, x_{2m+1}^{\,}x_{2m}^{d_{2m}}\ldots x_2^{d_2}x_1^{d_1}x_0^{\,} = x_{2m}^{2e_{2m}}\ldots x_2^{2e_2}x_1^{2e_1}x_0^{2e_0}}. \\
\end{align*}

Given a generating set $Y$ of a black box group, with $\abs{Y} = 3$, in time $\OR{\mu \log q}$ it can be determined if $\gen{Y}$ satisfies the relations of Equation \eqref{sz_short_pres}.
\end{thm}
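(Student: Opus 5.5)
We prove the isomorphism by comparing with the abstract presentation of Section~\ref{bray_suzuki_presentation}, and then handle the algorithmic claim by counting group operations.

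\textbf{Relations hold in $\Sz(q)$.} First we check that $\hat{x},\hat{y},\hat{z}$ satisfy every relation in \eqref{sz_short_pres}, so that $\Sz(q)$ is a quotient of the presented group $\Gamma$. The relations $x^4=y^{q-1}=z^2=(yz)^2=1$ are immediate. The relation $xzx^2zx^{-1}z=1$ is \eqref{extrarel} rewritten using $\hat{x}^2=T(0,1)$ and $\hat{x}^{-1}=T(1,1)$, since $T(1,0)^{-1}=T(1,1)$. For the relations in $R_1$ and $R_2$ we use $x_i \mapsto T(\omega^i,0)$ together with the multiplication and conjugation rules, which give $T(a,0)^2=T(0,a^{t+1})$. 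The first relation in $R_2$ then reduces to $\omega^{t+1}$ being a root of its minimal polynomial. The second relation in $R_2$ is the definition of the $e_i$. The commutator relations in $R_1$ come from a direct calculation of $[T(1,0),T(\omega^i,0)]=T(0,\omega^i+\omega^{it})$. We compare this with $x_{i(t-1)}^2x_{i(2-t)}^2\mapsto T(0,\omega^{i(t-1)(t+1)}+\omega^{i(2-t)(t+1)})$ and simplify using $t^2=2q\equiv 2 \pmod{q-1}$, which gives $(t-1)(t+1)=t^2-1\equiv 1$ and $(2-t)(t+1)=2+t-t^2\equiv t$.

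\textbf{Upper bound on $\abs{\Gamma}$.} Next we show $\abs{\Gamma}\leqslant\abs{\Sz(q)}$. Following the discussion after the statement of the relations, let $B=\gen{x,y}\leqslant\Gamma$.
\begin{enumerate}
\item The relations $[x_0^2,x_i]=1$, conjugated by powers of $y$, make $A=\gen{x_i^2 : i\in\Z}$ central in $\gen{x_i}$.
\item The minimal-polynomial relation for $\omega^{1+t}$ shows that $A$ is elementary abelian of order at most $q$.
\item Relation \eqref{min_poly_order2} shows that $N=\gen{x_i : i\in\Z}$ modulo $A$ is spanned by $x_0,\dots,x_{2m}$.
\item The commutator relations show that $N/A$ is abelian.
\end{enumerate}
Hence $\abs{N}\leqslant q^2$, $N$ is normalised by $y$, and $\abs{B}\leqslant q^2(q-1)$.

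We then construct elements $T(a,b)\in N$ and $D(\omega^i)=y^i$ in $\Gamma$ satisfying all the relations \eqref{amalrels}. This is the main obstacle. We must define $T(a,b)$ via the $\F_2$-bases $\{\omega^i\}$ and $\{\omega^{i(t+1)}\}$, and verify the multiplication rule $T(a,b)T(c,d)=T(a+c,b+d+ac^t)$ from the commutator relations by bilinearity. The relation $zD(k)z=D(k^{-1})$ comes from $(yz)^2=1$ together with $z^2=1$.

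Once this is done, the argument of Section~\ref{bray_suzuki_presentation} shows that every element of $\Gamma$ has the form $dt_0$ or $dt_0zt_1$. This gives $\abs{\Gamma}\leqslant q^2(q-1)+q^2(q-1)\cdot q^2=\abs{\Sz(q)}$ by Lemma~\ref{lem_sz_facts}. Combined with the surjection $\Gamma\to\Sz(q)$, this proves the isomorphism.

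\textbf{Verification cost.} There are $\OR{m}=\OR{\log q}$ relations. The elements $x_i$ for $0\leqslant i\leqslant 2m+1$ are computed iteratively as $x_{i+1}=x_i^y$, using $\OR{\log q}$ operations in total. The elements $x_{i(t-1)}$ and $x_{i(2-t)}$ need conjugation by $y^{j}$ for arbitrary exponents $j$. We obtain these by precomputing the elements $y^{2^k}$ and the corresponding conjugates, so that each such element costs $\OR{\log q}$ operations. Since there are $\OR{\log q}$ of them, this threatens a total of $\OR{(\log q)^2}$.

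To achieve $\OR{\log q}$, we exploit the structure of the exponents. Modulo $q-1$ we have $i(t-1)\equiv i(t+1)^{-1}$, so the exponents form arithmetic progressions in $i$. We therefore compute $y^{t-1}$ and $y^{2-t}$ once, each in $\OR{\log q}$ operations, and update successively: $x_{(i+1)(t-1)}=x_{i(t-1)}^{y^{t-1}}$, and similarly for $2-t$. Each step costs $\OR{1}$ operations. Every relation then involves $\OR{1}$ precomputed elements, except those in $R_2$, which are products of length $\OR{\log q}$ evaluated once. The total cost is therefore $\OR{\mu\log q}$.
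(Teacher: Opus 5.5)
Your proposal is correct and follows essentially the paper's route. The relations in $R$ force $B=\gen{x,y}$ to have the Borel structure, the normal-form argument of Section~\ref{bray_suzuki_presentation} then gives $\abs{\Gamma}\leqslant\abs{\Sz(q)}$, and the cost bound comes from computing $y^{q-1}$, $y^{t-1}$, $y^{2-t}$ by repeated squaring and obtaining each $R_1$ relator from the previous one with $\OR{1}$ operations. One small point: centrality of $A$ in your step 1 already needs the minimal-polynomial relation, because conjugation alone only gives $[x_j^2,x_k]=1$ for $0\leqslant k-j\leqslant 2m$.

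The step you call the main obstacle is in fact immediate. You have $\abs{B}\leqslant q^2(q-1)=\abs{\gen{\hat{x},\hat{y}}}$, and the surjection $\Gamma\to\Sz(q)$ maps $B$ onto $\gen{\hat{x},\hat{y}}$, so $B\cong\gen{\hat{x},\hat{y}}$. You can therefore pull back $T(a,b)$ and $D(k)$, and \eqref{amalrels} then holds automatically, with no bilinearity computation.
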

\begin{proof}
We can assume that the minimal polynomials of $\omega$ and $\omega^{1+t}$, and hence all
$c_i$ and $d_i$ are pre-computed. As noted above, the $e_i$ can also be
pre-computed from $\hat{x}$ and $\hat{y}$.

The relator $y^{q-1}$ requires time $\OR{\mu \log q}$ using exponentiation by squaring.

Each relator $i+1$ in $R_1$ can be computed from the relator $i$ using a
constant number of group operations. Hence all relations in $R$ can be
verified in time $\OR{\mu \log q}$.
\end{proof}


\section{Point stabiliser in $\Sz(q)$}

Recall from Lemma \ref{sz_maximal_subgroups} that a point stabiliser
in a Suzuki group corresponds to $\gen{\hat{x}, \hat{y}} < \Sz(q)$. We
now present an algorithm that constructs such a point stabiliser. This
is essentially \cite[Theorem 7]{MR3715222} extended to the black box
group context.

\begin{thm} \label{thm_bray_stab_trick} There exists a one-sided Monte
  Carlo algorithm with no false positives that, given a simple
  black box group $G = \gen{X}$ such that $G \cong \Sz(q)$, $\delta >
  0$, and an element $f \in G$ of order $4$ constructs $h \in G$ such
  that $\abs{h} = q - 1$ and $\gen{f, h}$ is a point stabiliser in
  $G$. The probability of failing to construct $h$ is less than $\delta$ and the
  algorithm has time complexity $\OR{N(\mu (\log q) +
    \xi + \eta)}$, where
\[N = \ceil{\frac{\log \delta}{\log (1 - \frac{\phi(q - 1)}{q - 1} (1 - 1/(q^2 + 1)) (1 - 3/q^2))}}. \]
If $f$ is given as
  an $\SLP$ in $X$ of length $\OR{n}$, then $h$ will be constructed as
  an $\SLP$ in $X$ of length $\OR{n + \log q}$.
\end{thm}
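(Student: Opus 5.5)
We must design an algorithm, and the form of $N$ shows what one trial should look like. Each trial will succeed with probability at least $\frac{\phi(q-1)}{q-1}(1-1/(q^2+1))(1-3/q^2)$. The factor $1-1/(q^2+1)$ is the probability that a random involution differs from a fixed one, or fixes a different point (Lemma \ref{lem_dihedral_trick}). The factor $\phi(q-1)/(q-1)$ is the proportion of elements of order $q-1$ in a point stabiliser (Lemma \ref{lem_sz_facts}). The factor $1-3/q^2$ is a small correction.

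The algorithm will proceed as follows. The element $f$ fixes a unique point $P\in\OV$, and $j=f^2$ is an involution in $\Zent(G_P')$. Choose a random $g\in G$ and form $j^g$, a uniformly random involution in the conjugacy class. Use the dihedral trick: if $c=jj^g$ has odd order $k$ (one call to the order oracle), then $u=c^{(k+1)/2}$ satisfies $j^{gu}=j$. To see this, note that $\langle j,j^g\rangle$ is dihedral of order $2k$, so $j$ and $j^g$ are conjugate by a power of $c$. Then $w=gu$ centralises $j$. That alone only places $w$ in the point stabiliser $G_P$, which is not enough, since we need the full Borel subgroup and not just $\Cent_G(j)=G_P'$.

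I would modify the construction to land in the whole stabiliser. Take a second random involution-type element: pick random $g$ and set $w_1=j^g$; conjugate it back by the dihedral element to obtain $gu\in G$ mapping $P$ to $P$. More precisely, the element $gu$ fixes the fixed point of $j$, namely $P$ (because $j^{gu}=j$ implies $gu$ normalises $\Cent_G(j)$, hence lies in $\Norm_G(G_P')=G_P$ by Lemma \ref{sz_maximal_subgroups}). Moreover, $gu$ is essentially a uniformly random element of $G_P$. The coset argument runs as follows: $g$ uniform gives $j^g$ uniform among involutions, and $u$ is determined by $j^g$. Hence $gu$ ranges over $G_P$ uniformly as $g$ ranges over each fibre $\Cent_G(j)g$ intersected appropriately. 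This needs checking, and I expect it to be the delicate step. I would verify it by computing that the map $g\mapsto gu$ is $|\text{fibre}|$-to-one onto $G_P$ with equal fibres, which is where the correction factor $1-3/q^2$ might enter: $(1-3/q^2)$ accounts for degenerate cases (such as $j^g=j$ or $j^g\in\Zent(G_P')$, where the product has order $1$ or $2$), bounded crudely.

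Given a uniformly random $h=gu\in G_P$, test $\abs{h}=q-1$ using the order oracle. This succeeds with probability $\phi(q-1)/(q-1)$ by Lemma \ref{lem_sz_facts}. Then $\langle f,h\rangle$: $h$ acts as a generator of the torus on $G_P/G_P'$ and $f\notin\Zent(G_P')$, so by Lemma \ref{sz_maximal_subgroups} the only maximal subgroup containing both is $G_P$. The subgroup $\langle f,h\rangle$ contains $f^{h^i}$, whose images generate $G_P/G_P'$-compatible parts, so it is all of $G_P$ (using Equation \eqref{eqn:T_conj}: conjugates $T(\lambda a,\cdot)$ of $f=T(a,b)$, $a\ne0$, generate $U$).

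For the cost of each trial: one random element costs $\xi$; two order computations cost $\eta$; and powering to compute $u$ costs $\OR{\mu\log q}$. Repeating $N$ times gives failure probability at most $\delta$, and the SLP length grows by $\OR{\log q}$ from the powering. The result is one-sided because the output is only returned after verifying $\abs{h}=q-1$ and $[j,h^{-1}jh]$-type membership; the order check together with the construction guarantees correctness.
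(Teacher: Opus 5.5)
There is a genuine gap at the core of your construction. The element $w = gu$ satisfies $j^{w} = j$ with $j = f^2$, so $w \in \Cent_G(j)$. Since all involutions are conjugate to $\hat{x}^2$, fact \ref{sz_fact_point_stab_center} of Lemma \ref{lem_sz_facts} gives $\Cent_G(j) = G_P' \cong \F_q^{+}.\F_q^{+}$. This is a $2$-group of exponent $4$, not the full stabiliser $G_P = \Norm_G(G_P')$. Your ``modification'' is the same construction again, so the claim that $gu$ is essentially uniform in $G_P$ is false. In fact $gu$ always has order $1$, $2$ or $4$, and the test $\abs{h} = q-1$ never succeeds. Any element conjugating $f^2$ to itself has trivial image in $G_P/G_P' \cong \F_q^{\times}$, so no amount of fibre-counting can rescue this. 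What you have rediscovered is Bray's centraliser algorithm \cite{bray00}. Note also that with $c = jj^g$ of odd order $k$, the exponent giving $(j^g)^{c^e} = j$ is $e = (k-1)/2$, not $(k+1)/2$.

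The missing idea is to conjugate a \emph{different} involution of the same root group onto $f^2$. Take a random $g \in \Cent_G(f^2)$, which is exactly what your construction, i.e.\ \cite{bray00}, provides. Set $j := g^2$ if $\abs{g} = 4$ and $j := g$ otherwise, giving an involution in $\Zent(G_P')$, and repeat until $j \notin \set{1, f^2}$. In the paper this, rather than degenerate choices of $j^g$, is what the factor $1 - 3/q^2$ refers to. Then pick a random $c$ with $j^c \notin \Cent_G(f^2)$, so that $\abs{f^2 j^c} = 2k+1$, and set $h = c(f^2 j^c)^k$. Now $j^h = f^2$, so $h$ carries the fixed point of $j$ to that of $f^2$, i.e.\ $h \in G_P$. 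Also $h \notin G_P'$ because $j \neq f^2$. Hence $h$ has odd order dividing $q-1$, equal to the order of its image in $G_P/G_P'$. That image is the unique torus element taking $j$ to $f^2$, since the torus acts regularly on $\Zent(G_P') \setminus \set{1}$ by fact \ref{sz_fact_even_order_center}. So as $j$ varies, $h$ has order $q-1$ with probability about $\phi(q-1)/(q-1)$. With this correction, your cost accounting, the $\OR{n + \log q}$ bound on SLP length, and your argument that $\gen{f, h} = G_P$ all go through.
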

\begin{proof}
The algorithm proceeds as follows:
\begin{itemize}
\item Use the algorithm of \cite{bray00} to construct $g \in
  \Cent_G(f^2) \cong \F_q^{+} {.} \F_q^{+}$. If $\abs{g} = 4$ then $j :=
  g^2$, otherwise $j := g$. Repeat this step until $j \notin \set{1, f^2}$. The success probability is $(1 - 3/q^2)$ and this
  requires time $\OR{\mu + \xi}$.
\item Choose random $c \in G$ and check if $j^c \notin
  \Cent_G(f^2)$.
  As in the proof of Lemma \ref{lem_dihedral_trick}, the product of two involutions has odd
  order if they lie in distinct point stabilisers, so
  $\abs{f^2 j^c} = 2k + 1$ for some $k \in \N$. The number of
  involutions in $\Cent_G(f^2)$ is $q - 1$, and the number of
  involutions in $G$ is $(q - 1)(q^2 + 1)$, hence the probability of
  success is $1-1/(q^2 + 1)$ if $G \cong \Sz(q)$. Return to the first
  step otherwise. By Lemma \ref{lem_dihedral_trick}, this step
  requires time $\OR{\mu + \xi + \eta}$.
\item Let $h = c (f^2 j^c)^k$. This requires time $\OR{\mu
  \log q}$. Then $j^h = f^2$, so $h$ must fix the point fixed by $f^2$ and $j$,  and hence $\gen{f, h}$ lies in a point stabiliser.
\item Since $h \notin \Cent_G(f^2)$, it must have odd order, but $\gen{f, h}$ lies in a point stabiliser so $\abs{h} \mid q - 1$. Determine
  if $\abs{h} = q - 1$. If $\abs{h} \neq q - 1$, return to the
  first step. If $\abs{h} = q - 1$, then, by Lemma \ref{lem_sz_facts}, fact \ref{sz_fact_point_stab}, $\gen{f, h}$ must be the whole point stabiliser.
\end{itemize}
The probability that $h$ has the correct order is $\phi(q - 1) / (q -
1)$. Hence if we execute at most $N$ iterations, then the probability of
not finding $h$ is less than $\delta$, if $G \cong \Sz(q)$.

Note that we do not have to determine the order of $f^2 j^c$ to
compute $h$. It is sufficient to know an odd multiple of the order,
\emph{e.g.} $(q^2 + 1)(q - 1)$. This is because if $g$ has
order $(2k+1)$, but we only know a multiple $(2k + 1)(2l + 1) = 2(2kl
+ k + l) + 1$, then we can still compute $g^k$ since $g^{2kl + k + l}
= g^{l (2k + 1) + k} = g^k$.
\end{proof}

Later we shall need the following constructive membership testing in a point stabiliser.

\begin{thm} \label{thm_parabolic_slp} There exists an algorithm that,
  given standard generators $x, y \in G \cong \Sz(q)$ and $g \in U
  \geqslant G$, decides whether or not $g \in \gen{x, y}$, and if so,
  returns an $\SLP$ of $g$ in $\set{x, y}$ of length $\OR{\log q}$. The
  algorithm has time complexity $\TR{q \mu}$.
\end{thm}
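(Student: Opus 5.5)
The plan is to peel off the two coordinates of $g$ one at a time, using only black box tests of the form ``does this element have order $5$?'' (Lemma \ref{lem_sz_facts}, fact \ref{sz_fact_trace_5}). We write a putative $g = T(a,b)D(\lambda)$, with $\lambda = \omega^j$, and we also need $z$, which is available from Theorem \ref{thm_sz_std_gens}. The field elements we need are spanned by the conjugates $x_i = x^{y^i}$, which correspond to $T(\omega^i, 0)$, and by the $x_i^2$, which correspond to $T(0, \omega^{i(t+1)})$. These elements form an $\F_2$-basis for the $a$-part and for the $b$-part respectively; the $b$-part basis is the one noted before Theorem \ref{thm_short_pres}. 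Consequently any $T(a,b)$ is a product of $\OR{\log q}$ of them. This gives SLPs of length $\OR{\log q}$, provided the powers $y^i$ are built by repeated squaring or reused.

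\textbf{Step 1: membership test and the $a$-coordinate.} First compute $g' = (x^2)^g$. If $g \in \gen{x,y}$, then $g'$ is an involution in $\Zent(\gen{x,y}')$. We detect this by testing whether $[g', x] = 1$ and $[g', x^{y}] = 1$; more robustly, $g'$ must commute with $x^2$ and the product $x^2 g'$ must have order dividing $2$. If instead $g$ moves $P_\infty$, then $g'$ is an involution fixing a different point, so $x^2 g'$ has odd order greater than $1$ by the argument of Lemma \ref{lem_dihedral_trick}. This comparison decides membership in $\gen{x,y}$ in $\OR{\mu}$ operations. Elements $g \notin G$ fail one of these checks or a final verification.

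Assuming $g \in \gen{x,y}$, we recover $a$ by searching over the exponent. We have $g' = T(0, \lambda^{t+1})$, independent of $a$ and $b$. So we search for the unique $i$ such that $\abs{T(0,\omega^{-i(t+1)\cdot(\ldots)}) \ldots} = 5$. More simply, we use Lemma \ref{lem_elements_order5}: we find the unique $i \in \set{0,\dots,q-2}$ with $\abs{(g')^{y^{i}} z} = 5$. By equation \eqref{eqn:T_conj}, $(g')^{y^i} = T(0, \lambda^{t+1}\omega^{i(t+1)})$, so order $5$ forces $\lambda\omega^i = 1$, which determines $j = -i$. This costs $q-1$ conjugations and order-$5$ tests, each of constant cost (we only check $u^5 = 1$ and $u \neq 1$), for a total of $\TR{q\mu}$. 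We then replace $g$ by $g y^{i}$, so that $g = T(a,b)$. Next, the commutator $[g, x^2]$ is trivial, so instead we recover $a$ from $(x)^{g}$, or from $g^2 = T(0, a^{t+1})$. Namely, we find the unique $k$ with $\abs{(g^2)^{y^k} z} = 5$. Since $\lambda \mapsto \lambda^{t+1}$ is a bijection, this gives $a = \omega^{-k}$, again in $\TR{q\mu}$, with $g^2 = 1$ handling the case $a = 0$. We form the word $T(a,0) = x^{y^{-k}}$ and replace $g$ by $T(a,0)^{-1}g = T(0,b')$. We find $b'$ the same way as $a$. Finally we write $g$ as a short SLP and check the resulting equality directly; this check gives the exact membership answer and rules out false positives.

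\textbf{Main obstacle.} The delicate point is certifying that the order-$5$ tests really pin down the parameters. In particular, the search is over a single exponent, so the lower bound $\Omega(q\mu)$ comes only from the worst case of that search. The SLP-length bound also needs checking: we must express $y^{i}$ with $\OR{\log q}$ cells rather than storing the whole search chain. We handle this by recording $i$ and rebuilding $y^i$ by squaring at the end. We also have to make sure the membership check works when $g$ lies in the overgroup $U$ but not in $G$. Here the final explicit comparison of $g$ with the constructed word $T(a,0)T(0,b')D(\lambda)$ is what makes the algorithm deterministic and correct.
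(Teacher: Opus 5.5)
Your proposal is correct and follows essentially the paper's route. The membership pretest that $x^2(x^2)^g$ has order dividing $2$ is the paper's $[x^2,g]^2=1$. After that you run three linear searches over powers of $y$ to strip off $D(\lambda)$, then $a$, then $b$. You record the exponents and rebuild $y^i$ by squaring, which gives the $\OR{\log q}$ SLP, and you do a final explicit check against $g$ to rule out false positives.

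The only real difference is how you recognise the correct exponent. You use order-$5$ tests against $z$ (Lemma \ref{lem_elements_order5}), but the theorem only supplies $x,y$. The paper instead uses direct equality tests such as $(x^2)^g=(x^2)^{y^k}$. You can remove the dependence on $z$ the same way: replace $\abs{(g')^{y^i}z}=5$ by $(g')^{y^i}=x^2$, and make the analogous replacement in your later two searches.
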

\begin{proof}
  Clearly, if $g \in G$, then $g \in \gen{x, y}$ if and only if $(x^2)^g$
  centralises $x^2$, if and only if $[x^2, g]^2 = 1$. This provides a quick initial test. If $g \in G$, then $g$ corresponds to $T(a, b) D(w)$, for some $a, b \in \F_q$ and $w \in \F_q^{\times}$. To obtain an
  SLP for $g$, we reduce it to the identity, as follows:
\begin{itemize}
\item Find $k \in \set{1, \dotsc, q - 1}$ such that $(x^2)^g = (x^2)^{y^k}$. Return \texttt{false} if $k$ does not exist or if it is not unique. Let $g_1 = g y^{-k}$. Then $g_1$ corresponds to $T(a, b_1)$ for some $b_1 \in \F_q$ if $g_1 \in G$. This requires time $\TR{q \mu}$.

\item If $g_1^2 = 1$ then let $g_2 = g_1$ and skip to the next step. Otherwise find $k \in \set{1, \dotsc, q - 1}$ such that $g_1 x^{y^k} = 1$. Return \texttt{false} if $k$ does not exist or if it is not unique. Let $g_2 = g_1 x^{y^k}$. Then $g_2$ corresponds to $T(0, b_2)$ for some $b_2 \in \F_q$ if $g_2 \in G$. This requires time $\TR{q \mu}$.

\item If $g_2 = 1$ then let $g_3 = g_2$ and skip to the next step. Otherwise find $k \in \set{1, \dotsc, q - 1}$ such that $g_2 (x^2)^{y^k} = 1$. Return \texttt{false} if $k$ does not exist or if it is not unique. Let $g_3 = g_2 (x^2)^{y^k}$. Then $g_3 = 1$. This requires time $\TR{q \mu}$.

\item Now we have reduced $g$ to the identity, so we have an SLP $W$ for $g^{-1}$. Return \texttt{true} and $W^{-1}$.
\end{itemize}

If any of the steps fail, then $g \notin G$.
\end{proof}

\section{The main algorithm}
\label{section:main_alg}

We now describe the algorithm in Theorem \ref{thm_sz_std_gens}.
\begin{enumerate}
\item \label{main_alg_elt4_step} Choose at most
  \[N = \ceil{\frac{\log (\sqrt{\varepsilon})}{\log (1 - 1/q + 1/(q(q^2+1))))}}\]
  random pairs of elements $(f, c)$ from $G$, until $\abs{f} = 4$ and
  $\abs{s s^c} \notin \set{1, 2}$, where $s = f^2$. The total number
  of elements of order $4$ in $G$ is $q (q^2 + 1) (q - 1)$, and
  $\abs{G} = q^2 (q^2 + 1)(q - 1)$. As in the proof of Theorem
  \ref{thm_bray_stab_trick}, the probability of finding a suitable $c$
  is $1-1/(q^2 + 1)$. Hence, if $G$ is isomorphic to $\Sz(q)$, then the probability of not finding $f$ and $c$
  after $N$ trials, is less than
  $\sqrt{\varepsilon}$. In this case, return
  failure, since $G$ is probably not isomorphic to $\Sz(q)$. This requires time $\OR{N (\mu + \xi)}$.

\item \label{main_alg_z_step} Let $z = s^c$. Note that we have $f, s, z$ as SLPs of
  length $\OR{q}$. Moreover, by Lemma \ref{lem_main_step1} we can let
  $z$ and $f$ correspond to $\hat{z}$ and $T(a_1, b_1)$, so that $s$
  corresponds to $T(0, a_1^{t + 1})$, for some $a_1, b_1 \in \F_q$.

\item \label{main_alg_stab_step} Apply Theorem \ref{thm_bray_stab_trick} with $f$, and
  $\delta = \sqrt{\varepsilon}$, and hence construct $h$ of order $q -
  1$.
  Then $h$ corresponds to $T(a_2, b_2) D(\lambda)$, for some
  $a_2, b_2 \in \F_q$ and $\lambda \in \F_q^{\times}$.

\item \label{main_alg_f_standard_step} By Lemma \ref{lem_elements_order5} and Equation
  \eqref{eqn:T_conj} in Section \ref{section:preliminaries}, we can
  find $j \in \set{1, \dotsc, q - 1}$ such that $\abs{s^{h^j} z} =
  5$. Return failure if no such $j$ exists, since $G$ is not $\Sz(q)$.
  Replace $s$ and $f$ with $s^{h^j}$ and $f^{h^j}$. Then by Lemma
  \ref{lem_elements_order5}, $s$ corresponds to $T(0, 1)$ and by
  Equation \eqref{eqn:T_conj} $f$ corresponds to $T(1, b_3)$ for some
  $b_3 \in \F_q$. This requires time $\OR{q \mu}$.

\item We now want to replace $h$ so that its diagonal part corresponds to $D(\omega)$. For each $j \in \set{1, \dotsc, q - 1}$, perform the following steps:
\begin{itemize}
\item Determine if $\gcd(j, q - 1) = 1$, and skip to the next $j$
  otherwise. At the same time compute $1/j \pmod{q - 1}$. By
  \cite[Corollary $11.10$, Theorem $8.24$]{VonzurGathen03}, computing
  $\gcd(j, q - 1)$ and $1/j$ requires time \[\OR{\log q (\log\log q)^2
    \log\log\log q}.\]
\item Find the minimal polynomial $p(w) = \sum_{i = 0}^{2m+1} d_i w^i$
  of $\omega^j$, over $\F_2$, i.e. $d_i \in \F_2$ for $0 \leqslant d_i \leqslant 2m+1$. If $\lambda = \omega^j$, then from Equation \eqref{min_poly_order2} in Section \ref{bray_sz_presentation}, \[f^{d_{2m+1} h^{2m+1}} f^{d_{2m} h^{2m}} \dotsm f^{d_1 h} f^{d_0}\]
  has order at most $2$. If this is not the case, then skip to the next $j$, otherwise let $k := 1/j \pmod{q - 1}$ and break. By \cite{MR1802067}, this requires time $\OR{(\log q)^2 \zeta + \mu \log q}$.
\end{itemize}
This step requires time $\OR{q\log(q) (\zeta \log q + \mu + (\log\log q)^2 \log\log\log q)}$. If no such $k$ is found, then return failure, since $G$ is not $\Sz(q)$.
\item \label{main_alg_h_standard_step} Replace $h$ with $h^k$. Then $h$ corresponds to $T(a_2, b_2)
  D(\omega)$. This requires time $\OR{\mu \log q }$.
\item \label{main_alg_x_step} Find $j \in \set{1, \dotsc, q - 1}$ such that $\bar{f} := f
  s^{h^j}$ has order $4$ and such that either $z \bar{f} z \bar{f}^2
  z \bar{f}^3 = 1$ or $z \bar{f}^3 z \bar{f}^2
  z \bar{f} = 1$. From Section \ref{bray_suzuki_presentation}, this implies
  that $\bar{f}$ corresponds to $T(1, 0)$ or $T(1, 0)^{-1} = T(1, 1)$,
  respectively. If $z \bar{f} z \bar{f}^2 z
  \bar{f}^3 = 1$, then let $x = \bar{f}$, otherwise let $x
  = \bar{f}^{-1}$. This requires time $\OR{q \mu}$. Note that we then have $x$ as an SLP in $X$ of length $\OR{q}$, and $x$ corresponds to $\hat{x}$. If no such $\bar{f}$ is found, then return failure, since $G$ is not $\Sz(q)$.

\item \label{main_alg_omega_step} Compute $\Tr(\omega^i)$ for each $i = 0, \dotsc, 2m$. This requires time $\OR{(\log q)^2 \zeta}$.
\item Apply Lemma \ref{lem_primitive_elt} and construct a primitive element $a = \omega^l \in \F_q$ such that
  $\Tr(a^{-1}) = 1$. By \cite[Corollary $11.10$, Theorem
  $8.24$]{VonzurGathen03} and Equation \eqref{eqn_trace_comp} in Section \ref{section:preliminaries}, this requires time $\OR{q \log q}$. 

\item Let $u = x^{h^l}$. Then $u$ corresponds to $T(a, b_4)$ for
  some $b_4 \in \F_q$. This requires time $\OR{ \mu \log q}$.
\item For each $j \in \set{1, \dotsc, q - 1}$, let $\bar{u} := u s^{h^j}$ and find $j$ such that $\bar{u} z$ has order $4$ and $(\bar{u}^2 z)^{\bar{u}} = (\bar{u}^2 z)^q$. By Lemma \ref{lem_sz_maximal_trick}, this $j$ is unique. This requires time $\OR{q \mu}$. If no such $j$ is found, then return failure, since $G$ is not $\Sz(q)$.
\item Now $\bar{u}$ corresponds to $T(a, b)$ for some specific $b \in \F_q$, and by the proof of
  Lemma \ref{lem_sz_maximal_trick}, $b$ is either $\alpha = a^{t + 1}
  \sum_{i = 0}^{m + 1} a^{-2^i}$ or $\alpha + a^{t + 1}$. Determine
  which value is correct by performing the Lemma \ref{lem_sz_maximal_trick} calculations directly with the matrices in $\Sz(q)$. This
  requires time $\OR{ \zeta \log q}$.
\item \label{main_alg_disclog} Observe that $b^{t - 1} = \sum_{i = 0}^{2m} l_i \omega^i$ for some $l_i \in \F_2$. Let $v = \bar{u} (\prod_{i = 0}^{2m} x^{l_i h^i})^2$.
Then $v$ corresponds to $T(a, 0)$. This requires time $\OR{\mu \log q  + \zeta}$. 
\item From Equation \eqref{EqTa0} in Section \ref{bray_suzuki_presentation}, the following relation holds:
\[ z T(a, 0) z = D(a^2) T(a, 0) z T(0, a^{-1-t}) \]
Hence $\bar{v} := z v z s^{h^{-l}} z v^{-1}$ corresponds to $D(a^2)$.
\item \label{main_alg_y_step} Let $y = \bar{v}^{q / (2l)}$. Then $y$ corresponds to $\hat{y}$. Clearly we have $y$ as an SLP of length $\OR{q}$.
\end{enumerate}

The only steps that are probabilistic, and so may fail to construct required elements even if the input group is $\Sz(q)$, are steps \ref{main_alg_elt4_step} and \ref{main_alg_stab_step}. Hence the failure probability of the whole algorithm is less than $\varepsilon$. Finally, we can also verify
that $x, y, z$ are standard generators, using Theorem \ref{thm_short_pres}. This proves Theorem \ref{thm_sz_std_gens}. 

\section{Constructive membership testing}
\label{section:other_alg}

We now describe the algorithm of Theorem \ref{thm_sz_membership}. At
several steps we need to determine if an element has order $5$. No
order oracle is required to do this, since it is sufficient to
determine if the element is not the identity and that its fifth power
is the identity. We will denote the (assumed) image of $g$ in the standard copy by $\hat{g}$.

\begin{enumerate}
\item Use Theorem \ref{thm_parabolic_slp} to determine if $g$ or $gz$
  lies in $\gen{x, y}$. If so return \texttt{true} and the
  corresponding SLP for $g$.
\item Now $\hat{g}$ fixes $P(a_1, b_1) \notin \set{P_{\infty}, P_0}$, so $(a_1, b_1) \neq (0, 0)$. Find $j \in \set{1, \dotsc,
    q - 1}$ such that 
$\abs{x^2 (x^2)^{g y^j}} = 5$. By Lemma \ref{lem_invol_formula}, this $j$ is unique if $g \in G$, so return
  \texttt{false} if $j$ does not exist or if it is not unique. This requires time $\TR{q \mu}$. Replace $g$ with $g y^j$.
\item By Lemma \ref{lem_invol_formula}, $(\hat{x}^2)^{\hat{g}} = (\hat{x}^2)^{\hat{z} T(a, b)}$, and we want to find $a, b \in \F_q$. Find $i \in \set{1, \dotsc,
    q - 1}$ such that $\abs{(x^2)^{y^i z} (x^2)^g} = 5$. By Lemma \ref{lem_slp_step2}, this $i$ is unique if $g \in G$, so return
  \texttt{false} if $i$ does not exist or if it is not unique. This requires time $\TR{q \mu}$. Let $c = \omega^{i (1 + t)}$, so that $\abs{T(0, c)^z T(0, 1)^{\hat{g}}} = 5$.
\item Find $k \in \set{1, \dotsc,
    q - 1}$ such that $\abs{(x^2)^{y^i z} (x^2)^{g (x^2)^{y^k}}} = 5$. This takes time $\TR{q \mu}$. If $k$ cannot be found, then $a = 0$ by Lemma \ref{lem_slp_step2}. Return
  \texttt{false} if $k$ is not unique. Otherwise $a = \omega^k$.
\item From the proof of Lemma \ref{lem_slp_step2} we know that $c^t
  (a^{t + 2} + ab + b^t)^2 = 1$, and we know both $a$ and $c$. Now consider
\begin{equation*}
\begin{split}
a^{t + 2} + ab + b^t + c^{-t/2} &= 0 \Longleftrightarrow \\
a^{2t + 2} + a^t b^t + b^2 + c^{-1} &= 0 \Longrightarrow \\
b^2 + a^{t+1} b + c^{-1} + a^t c^{-t/2} &= 0,
\end{split}
\end{equation*}
where the third equation is $a^t$ times the first added to the second.
This is a quadratic equation in $b$, and we obtain the solutions $b_1$ and $b_2$ using Lemma \ref{lem_quadratics}. This requires time $\OR{\log(q) \zeta}$.

\item \label{other_alg_disclog} Observe that $b_1^{t - 1} = \sum_{i = 0}^{2m} l_i \omega^i$ for some $l_i \in \F_2$. Let $u_1 = x^{y^k} (\prod_{i = 0}^{2m} x^{l_i y^i})^2$ and let $u_2 = u_1^{-1}$. By Equation \eqref{eqn:T_conj} in Section \ref{section:preliminaries}, $\hat{u_1} = T(a, b_1)$ and $\hat{u_2} = T(a, b_2)$. This requires time $\OR{\mu \log q}$.
\item Let $u \in \set{u_1, u_2}$ be such that $(x^2)^g = (x^2)^{z u}$. Return \texttt{false} if none exists. Then $h = g u^{-1} z$ centralises $x^2$, so $h \in \gen{x, y}$. Use Theorem \ref{thm_parabolic_slp} (first step not necessary) to obtain an SLP of $h$, and hence an SLP of $g$.
\end{enumerate}

This completes the proof of Theorem \ref{thm_sz_membership}.

\begin{rem}
  Step \ref{main_alg_disclog} in Section \ref{section:main_alg} and
  step \ref{other_alg_disclog} in Section \ref{section:other_alg} can
  also be performed using a discrete logarithm oracle. In the first
  case, find $r \in \Z$ such that $b = \omega^r$. Then let $v =
  \bar{u} (x^2)^{h^r}$, which corresponds to $T(a, 0)$. This takes
  time $\OR{\mu \log q + \chi_D}$, and the advantage is that the
  SLP of $v$ is a factor $\OR{\log q}$ shorter.

  The other case is analogous.

\end{rem}


\section{Implementation and performance}
The algorithms in Theorem \ref{thm_sz_std_gens} and
\ref{thm_sz_membership} have been implemented in $\MAGMA$. To illustrate that the
algorithm for Theorem \ref{thm_sz_std_gens} is very practical, in Table \ref{tbl:matrix_groups_8}, \ref{tbl:perm_groups_8}, \ref{tbl:matrix_groups_16} and \ref{tbl:perm_groups_16} we display the timings when this
algorithm is executed on the representations of $\Sz(8)$ and $\Sz(32)$
that are available in version\ $3$ of the
\textsc{Web-Atlas} \cite{atlas_www}. The time shown is the average
taken over $100$ executions of Theorem \ref{thm_sz_std_gens} and
\ref{thm_sz_membership}, respectively. 

We have also benchmarked the algorithms for Theorem \ref{thm_sz_std_gens} and
\ref{thm_sz_membership} on the natural representations
for each field size. For each field size $q = 2^{2m + 1}$, the average
times over $100$ executions were recorded. The results are shown in
Figure \ref{fig:bb_natural_rep}. These graphs support the
stated time complexity.

All tests of Theorem \ref{thm_sz_std_gens} used $\varepsilon = 2^{-100}$.

The benchmark was performed on using $\MAGMA$ V2.28-21, Intel64-AVX2-Cuda11
flavour on an Intel i5-12600K 3.6GHz CPU and 16GB of RAM.


\begin{table}[hb!]
\captionof{table}{Timings for matrix groups for $q = 8$}
\label{tbl:matrix_groups_8}
\begin{tabular}{l|l|l|l}
Degree & Ring & Thm \ref{thm_sz_std_gens} [s] & Thm \ref{thm_sz_membership} [s] \\
\hline
64 & $\F_2$ & 0.001 & 0.001 \\
4 & $\F_{2^3}$ & 0.0003 & 0.0002 \\
16 & $\F_{2^3}$ & 0.0008 & 0.0008 \\
14 & $\F_5$ & 0.0008 & 0.0006 \\
35 & $\F_5$ & 0.0034 & 0.0045 \\
63 & $\F_5$ & 0.056 & 0.0074 \\
195 & $\F_5$ & 0.0275 & 0.0382 \\
65 & $\F_{5^3}$ & 0.099 & 0.1473 \\
64 & $\F_7$ & 0.0061 & 0.0083 \\
91 & $\F_7$ & 0.0119 & 0.016 \\
105 & $\F_7$ & 0.0105 & 0.0139 \\
14 & $\F_{7^2}$ & 0.0017 & 0.002 \\
14 & $\F_{13}$ & 0.0017 & 0.0022 \\
35 & $\F_{13}$ & 0.0053 & 0.008 \\
65 & $\F_{13}$ & 0.0173 & 0.026 \\
91 & $\F_{13}$ & 0.0339 & 0.0539 \\
14 & $\Z[i]$ & 0.046 & 0.0688 \\
64 & $\Z$ & 0.0076 & 0.0101 \\
65 & $\C$ & 0.1738 & 0.099 \\
91 & $\Z$ & 0.1959 & 0.25330 \\
105 & $\Z[i]$ & 15.288 & 22.652 \\
\end{tabular}

\vspace{1em}

\captionof{table}{Timings for permutation groups for $q = 8$}
\label{tbl:perm_groups_8}
\begin{tabular}{l|l|l}
Degree & Thm \ref{thm_sz_std_gens} [ms] & Thm \ref{thm_sz_membership} [ms] \\
\hline
65 & 0.2 & 0.1 \\
520 & 0.3 & 0.2 \\
560 & 0.4 & 0.2 \\
1456 & 0.6 & 0.6 \\
2080 & 0.8 & 0.8 
\end{tabular}

\vspace{1em}

\captionof{table}{Timings for matrix groups for $q = 32$}
\label{tbl:matrix_groups_16}
\begin{tabular}{l|l|l|l|l}
Degree & Ring & Thm \ref{thm_sz_std_gens} [s] & Thm \ref{thm_sz_membership} [s] \\
\hline
4 & $\F_{2^5}$ & 0.001 & 0.0013 \\
124 & $\F_5$ & 0.0364 & 0.0767 \\
124 & $\F_{41}$ & 0.175 & 0.4033 \\
124 & $\Z[i, 1/2]$ & 176 & 628
\end{tabular}

\vspace{1em}

\captionof{table}{Timings for permutation groups for $q = 32$}
\label{tbl:perm_groups_16}
\begin{tabular}{l|l|l}
Degree & Thm \ref{thm_sz_std_gens} [ms] & Thm \ref{thm_sz_membership} [ms] \\
\hline
1025 & 1.1 & 1.6 \\
198400 & 484.2 & 1007
\end{tabular}
\end{table}

\begin{figure}[ht!]
\caption{Benchmark of our implementations of the algorithms in Theorem \ref{thm_sz_std_gens} and \ref{thm_sz_membership} on the natural representation of $\Sz(q)$}
\includegraphics[scale=0.7]{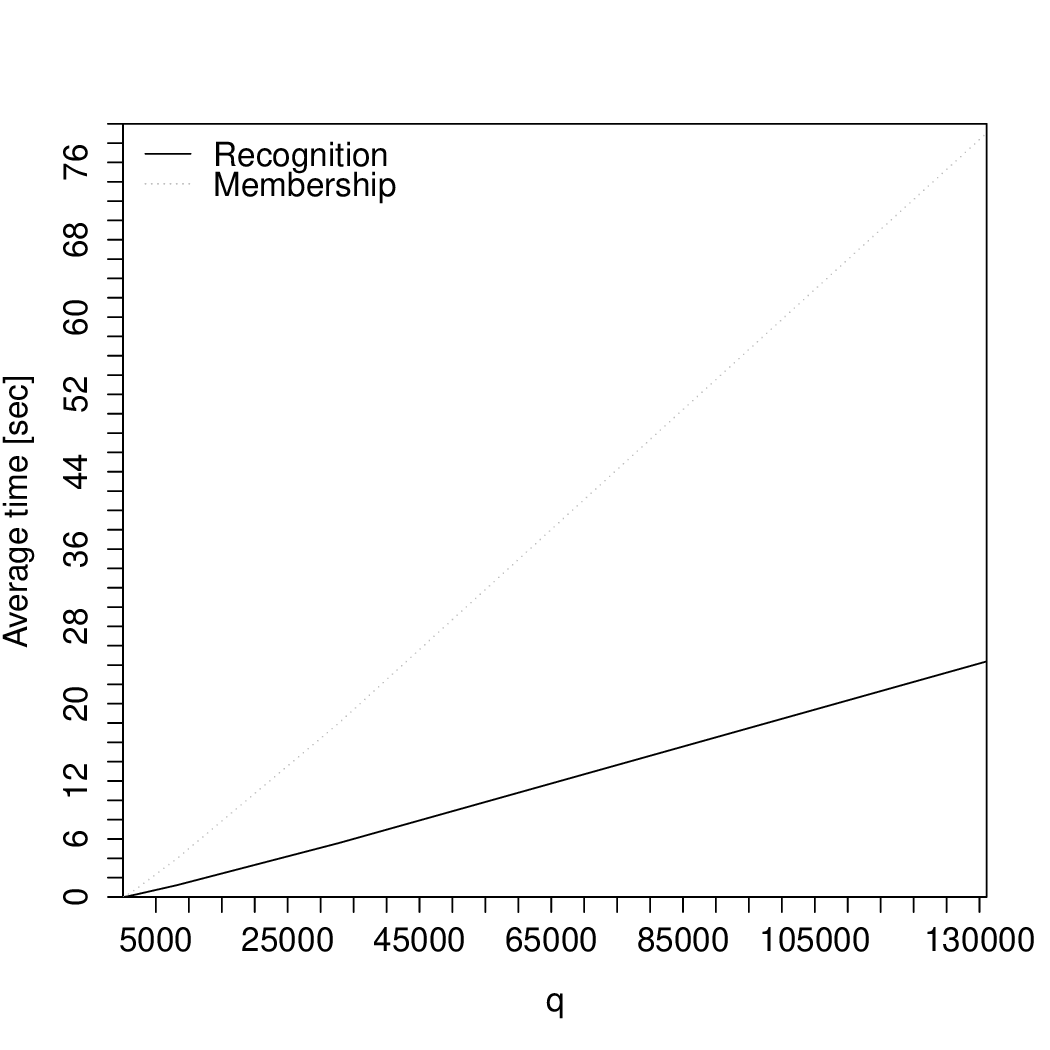}
\label{fig:bb_natural_rep}
\end{figure}

\FloatBarrier
\bibliographystyle{amsplain}
\bibliography{blackbox}

\end{document}